\newtheorem{theorem}{Theorem}[section]
\newtheorem{prop}[theorem]{Proposition}
\newtheorem{cor}[theorem]{Corollary}
\newtheorem{lem}[theorem]{Lemma}
\newtheorem{rem}[theorem]{Remark}
\newtheorem{defn}[theorem]{Definition}
\newtheorem{example}[theorem]{Example}
\newtheorem{rems}[theorem]{Remarks}
\def\ra{\rightarrow}
\def\b{\beta}
\def\a{\alpha}
\def\mc{\mathcal}
\def\lan{\langle}
\def\ran{\rangle}
\def\un{\underline}
\def\d{\Delta}
\begin{document}
\title{Representation stability of power sets and square free polynomials}
\thanks{2010 AMS Classification Primary: 20C30, 13A50; Secondary: 20F36, 55R80.\\
\indent This research is partially supported by Higher Education Commission, Pakistan.}
\author{Samia Ashraf$^{1}$, Haniya Azam$^{2}$, Barbu Berceanu$^{3}$}
\address{$^{1}$Abdus Salam School of Mathematical Sciences,
 GC University, Lahore-Pakistan.}
\email {samia.ashraf@yahoo.com}
\address{$^{2}$Abdus Salam School of Mathematical Sciences,
 GC University, Lahore-Pakistan.}
\email {centipedes.united@gmail.com}
\address{$^{3}$Abdus Salam School of Mathematical Sciences,
 GC University, Lahore-Pakistan, and
 Institute of Mathematics Simion Stoilow, Bucharest-Romania.(Permanent address)}
\email {Barbu.Berceanu@imar.ro}

\keywords{Representation stability of the symmetric group, square free polynomials, Arnold algebra. }
\maketitle

\maketitle
\pagestyle{myheadings} \markboth{\centerline {\scriptsize
SAMIA ASHRAF,\,\,\,HANIYA AZAM ,\,\,BARBU BERCEANU}}{\centerline {\scriptsize
Representation stability of power sets and square free polynomials}}
\maketitle
\begin{abstract} The symmetric group acts on the power set and also on the set of square free polynomials. These two related representations
are analyzed from the stability point of view. An application is given for the action of the symmetric group on the cohomology of the pure braid group.
\end{abstract}

\textwidth=13cm

\pagestyle{myheadings}
\section{Introduction}

The symmetric group $\mc{S}_n$ acts naturally on the power set $\mc{P}(n)$ of the set $\underline{n}=\{1,2,\ldots,n\}$:
$\pi\cdot A=\pi(A)$. It is obvious that the orbits of this action are
$\mc{P}_k(n)=\{A\subset\underline{n}\mid \mathrm{card}(A)= k\}$  for $k=0,1,\ldots,n$. More interesting is the linear
representation of the symmetric group on the linear space $L\mc{P}(n)$, the $\mathbb{Q}$-span of the power set: the
${\mc S}_n$-submodules $L\mc{P}_k (n)$ are not irreducible. We decompose them into irreducible $\mc{S}_n$-modules and we describe their bases, using the
equivalent representation of $\mc{S}_n$ onto the quotient ring of square free polynomials in $n$ variables
$\mc{S}f(n)=\mathbb{Q}[x_1,x_2,\ldots,x_n]\diagup \lan x_1^2,x_2^2,\ldots,x_n^2\ran$.
Next we analyze the sequences of these representations, $(\mc{P}(n))_{n\geq0}$ and $(\mc{S}f(n))_{n\geq0}$, and some related sequences from the stability point of view introduced by Church and Farb \cite{CF} for the representation ring $R(\mc{S}_n)$. We define an analogue of this stability for the Burnside ring $\Omega(\mc{S}_n)$ and we analyze the stability of the action of $\mc{S}_n$ on $\mc{P}(n)$, see Section 4.

For the irreducible ${\mc S}_n$-modules we will use the standard notation: $V_{\lambda}$ corresponds to the partition
$\lambda=(\lambda_1\geq \lambda_2\geq\ldots\geq\lambda_t\geq0)$ of $n$, and the stable notations of Church and
Farb (\cite{CF,C}) $V(\mu)_n=V_{(n-\sum \mu_i,\mu_1,\mu_2,\ldots,\mu_s)}$ for $\mu=(\mu_1\geq\mu_2\geq\ldots\geq\mu_s\geq0)$
satisfying the relation $n-\sum \mu_i\geq \mu_1$. Similarly, $U_\lambda$ is the Specht module and $U(\mu)_n$ is the Specht module
$U_{(n-\sum \mu_i,\mu_1,\mu_2,\ldots,\mu_s)}$. See \cite{FH}, \cite{J}, \cite{K} for references for the representation theory of $\mc{S}_n$.

Following \cite{CF} and \cite{C}, we say that a direct sequence
$X_*=(X_0\stackrel{\varphi_{0}}{\to}X_1 \stackrel{\varphi_{1}}{\to}\ldots X_n\stackrel{\varphi_{n}}{\to} X_{n+1}\ra\ldots)$
where $X_n$ is an $\mc{S}_n$-module, is \emph{consistent} if $\varphi_{n}$ is $\mc{S}_n$-equivariant. The sequence is \emph{injective} if $\varphi_n$ is eventually injective
and $\mc{S}_*$-\emph{surjective} if for $n$ large $\mc{S}_{n+1}. \mathrm{Im}(\varphi_{n})=X_{n+1}$. The sequence $X_*$ is \emph{representation stable}
if it satisfies the above conditions and also, for any stable type $\mu=(\mu_1,\mu_2,\ldots,\mu_s)$ of $\mc{S}_n$ modules, the sequence $(c_{\mu,n})_n$ of multiplicities of $V(\mu)_n$ in $X_n$ is eventually constant. The sequence is \emph{uniformly representation stable} if there is a natural number $N$, independent of $\mu$, such that for any $\mu$ and any $n\geq N$, $c_{\mu,n}=c_{\mu,N}$. The sequence $X_*$ is \emph{strongly representation stable} if for any type $\mu=(\mu_1,\mu_2,\ldots,\mu_s)$ the
$\mc{S}_{n+1}$-span of the image of the isotypical component of $V(\mu)_n$ in $X_n$ coincides with the isotypical component $V(\mu)_{n+1}$ in $X_{n+1}$. We say that a consistent sequence is \emph{monotonic} if for each $\mc{S}_n$ submodule $U\cong c\cdot V(\mu)_n$ in $X_n$, the $\mc{S}_{n+1}$-span of the image of $U$ in $X_{n+1}$ contains $ c\cdot V(\mu)_{n+1}$ as a submodule. See \cite{CF} and \cite{C} for other versions of representation stability.

The main result of Sections 2 and 3 is the next theorem.
\\{\bf Theorem A.}
\emph{The sequence of $\mc{S}_*$-modules $(L\mc{P}_k(n))_{n\geq0}$ with $$L\mc{P}_k(n)=V(0)_n\oplus V(1)_n\oplus \ldots\oplus V(k)_n$$ (for $n\geq 2k$) is consistent,
uniformly representation stable and monotonic.}

In section 4 we define action stability for a sequence of $\mc{S}_*$-spaces $(X_n)_{n\geq0}$ and will show that the sequences $(\mc{P}_k(n))_{n\geq0}$ are strongly action stable and the sequence $(\mc{P}(n))_{n\geq0}$ is action stable: see Definition \ref{def4.4} and Proposition \ref{prop4.7}.

In the next section we transfer the results from $L\mc{P}_k(n)$ and $L\mc{P}(n)$ into the corresponding results for $\mc{S}f_k(n)$ and $\mc{S}f(n)$, the algebra of square free monomials. The Vi\`{e}te polynomials $\sigma_k^n=\mathop{\sum}\limits _{1\leq i_1\leq\ldots\leq i_k\leq n} x_{i_1}x_{i_2}\ldots x_{i_k}$ give a basis for the invariant part $\mc{S}f(n)^{\mc{S}_n}$. Our Proposition \ref{prop 5.8} is a generalization of this classical result: we describe canonical bases for all the irreducible $\mc{S}_n$-submodules of the square free polynomial algebra.

In Section 6 we apply some of the previous results to find the irreducible $\mc{S}_n$-modules of the quadratic part of the Arnold algebra, the cohomology algebra of the ordered configuration space of $n$ points in the plane. The stable cases, $n\geq 4$ for the first decomposition and $n\geq 7$ for the second, are given by:
\\{\bf Theorem B.}
\emph{The degree 1 and 2 components of the Arnold algebra decompose as } $$\mc{A}^1(n)=V(0)_n\oplus V(1)_n\oplus V(2)_n,$$ $$\mc{A}^2(n)=2V(1)_n\oplus 2V(2)_n\oplus2 V(1,1)_n\oplus V(3)_n \oplus 2V(2,1)_n \oplus V(3,1)_n.$$ These decompositions are given in \cite{CF} without proofs.

\section{Canonical ${\mc S}_n$ filtration on $L\mathcal{P}_k(n)$ }

First we can assume $k\leq \lfloor \frac{n}{2}\rfloor$ because of the next obvious proposition:
\begin{prop}\label{prop 2.1}
a) For any $k$, $0\leq k\leq n $ the complementary map $C$ is ${\mc S}_n$-equivariant:
$$ C: \mc{P}_k(n)\rightarrow \mc{P}_{n-k}(n).$$

b) The ${\mc S}_n$ representations $L\mc{P}_{k}(n)$ and $L\mc{P}_{n-k}(n)$ are isomorphic.
\end{prop}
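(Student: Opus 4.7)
The plan is to verify both parts directly, since the statement is essentially a set-theoretic observation lifted to linear representations.

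For part (a), I would define $C$ on basis elements by $C(A)=\underline{n}\setminus A$ and check $\mc{S}_n$-equivariance pointwise. Given $\pi\in\mc{S}_n$ and $A\in\mc{P}_k(n)$, one needs $C(\pi\cdot A)=\pi\cdot C(A)$, i.e.\ $\underline{n}\setminus\pi(A)=\pi(\underline{n}\setminus A)$. This is immediate from the fact that $\pi$ is a bijection of $\underline{n}$, so it preserves complements. Also $C$ maps $\mc{P}_k(n)$ into $\mc{P}_{n-k}(n)$ since $|\underline{n}\setminus A|=n-k$.

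For part (b), observe that $C$ is an involution ($C\circ C=\mathrm{id}$), hence a bijection between the sets $\mc{P}_k(n)$ and $\mc{P}_{n-k}(n)$. Extending $\mathbb{Q}$-linearly, one obtains a $\mathbb{Q}$-linear isomorphism $LC:L\mc{P}_k(n)\to L\mc{P}_{n-k}(n)$. By part (a), this map intertwines the $\mc{S}_n$-actions on the two spaces, so it is an isomorphism of $\mc{S}_n$-representations.

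There is essentially no obstacle here; the content is purely notational. The only thing to be careful about is to state the map $C$ explicitly as $A\mapsto\underline{n}\setminus A$ (the author writes only the symbol $C$), and to note that the extension to $L\mc{P}_k(n)$ is by linearity so that the equivariance on basis elements propagates to the whole representation. The statement is marked \emph{obvious} in the paper precisely because permutations commute with set-theoretic complementation, and this is really all that is needed.
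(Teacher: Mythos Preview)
Your proof is correct and matches the paper's intent exactly: the paper gives no argument beyond calling the proposition ``obvious,'' and your verification that $\pi(\underline{n}\setminus A)=\underline{n}\setminus\pi(A)$ followed by linear extension is precisely the obvious argument. Nothing more is needed.
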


\noindent Now we will define, for $0\leq k\leq \frac{n}{2}$, a \emph{canonical filtration}
$$F_\ast L\mc{P}_k(n):\,\,0<F_0 L\mc{P}_k(n)<F_1 L\mc{P}_k(n)<\ldots <F_k L\mc{P}_k(n)=L\mc{P}_k(n)$$
with ${\mc S}_n$-submodules as follows: for $A\in \mc{P}_i(n),\,0\leq i\leq k$, denote by
$\sigma_k^n(A)$ the element of $ L\mc{P}_k(n)$ given by
$$ \sigma_k^n(A)=\mathop{\sum}\limits_{B\in \mc{P}_{k-i}(\un{n}\setminus A)}A\sqcup B $$
and the ${\mc S}_n$-submodule $F_i L\mc{P}_k(n)$ as the span $\mathbb{Q}\langle \sigma_k^n(A)\mid \mathrm{card}(A)=i\rangle$,
for $0\leq i\leq k $. For instance:
$$\begin{array}{lll}
F_0 L\mc{P}_2(4)  & = & \mathbb{Q}\langle \{1,2\}+\{1,3\}+\{1,4\}+\{2,3\}+\{2,4\}+\{3,4\}\rangle, \\
F_1 L\mc{P}_2(4)  & = & \mathbb{Q}\langle\{1,2\}+\{1,3\}+\{1,4\},\{1,2\}+\{2,3\}+\{2,4\}, \\
                  &   & \{1,3\}+\{2,3\}+\{3,4\},\{1,4\}+\{2,4\}+\{3,4\}\rangle,\\
F_2 L\mc{P}_2(4)  & = & \mathbb{Q}\langle\{1,2\},\{1,3\},\{1,4\},\{2,3\},\{2,4\},\{3,4\}\rangle.
                  \end{array}$$
\begin{lem}{\label{lemma 2.2}}
The family $ \{\sigma_k^n(A)\mid \mathrm{card}(A)=i\}$ is a basis of $F_iL\mc{P}_k(n)$.
\end{lem}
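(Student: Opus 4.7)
By definition $F_iL\mc{P}_k(n)=\mathbb{Q}\lan\sigma_k^n(A):|A|=i\ran$, so only linear independence of the family $\{\sigma_k^n(A):|A|=i\}$ needs to be proved. The plan is to realize the ``fattening'' map $A\mapsto\sigma_k^n(A)$ as an iterate of a single-step up operator on the Boolean lattice, and then to establish injectivity of that operator by a ladder argument using the commutator of the up and down operators.

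Introduce the up and down operators
\[
\partial^*(B)=\sum_{y\in\un{n}\setminus B}B\cup\{y\},\qquad \partial(C)=\sum_{y\in C}C\setminus\{y\},
\]
so that $\partial^*\colon L\mc{P}_j(n)\to L\mc{P}_{j+1}(n)$ and $\partial\colon L\mc{P}_{j+1}(n)\to L\mc{P}_j(n)$. Writing $T_{j,k}(A):=\sigma_k^n(A)$, a direct count (each $k$-subset $C\supseteq A$ arises $|C\setminus A|=k-j$ times) will give $T_{j+1,k}\circ\partial^*=(k-j)\,T_{j,k}$. Iterating from $j=i$ up to $j=k-1$ with $T_{k,k}=\mathrm{id}$ then yields $(k-i)!\,\sigma_k^n(A)=(\partial^*)^{k-i}(A)$. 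Since $(k-i)!$ is invertible in $\mathbb{Q}$, linear independence of $\{\sigma_k^n(A):|A|=i\}$ is equivalent to injectivity of $(\partial^*)^{k-i}\colon L\mc{P}_i(n)\to L\mc{P}_k(n)$, which in turn reduces to injectivity of each single step $\partial^*\colon L\mc{P}_j(n)\to L\mc{P}_{j+1}(n)$ for $j=i,\ldots,k-1$.

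For the single step, I will equip every $L\mc{P}_j(n)$ with the inner product making $\mc{P}_j(n)$ orthonormal, so that $\partial$ and $\partial^*$ become mutually adjoint. A direct split of the double sum $\partial\partial^*(B)$ into the cases where the inserted element coincides with or differs from the removed one will give the commutation identity
\[
\partial\partial^*-\partial^*\partial=(n-2j)\,\mathrm{id}_{L\mc{P}_j(n)}.
\]
If $\partial^*v=0$ for some $v\in L\mc{P}_j(n)$, pairing $\partial^*v$ with itself and using this identity yields $0=\|\partial v\|^2+(n-2j)\|v\|^2$. The standing hypothesis $k\leq n/2$ gives $j\leq k-1<n/2$, both summands are nonnegative, and $v=0$ follows. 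The main obstacle will be the commutation relation: it is elementary but demands careful bookkeeping; once in hand, the Hilbert-space step closes in one line and the passage from $\sigma_k^n$ back to $\partial^*$ is routine.
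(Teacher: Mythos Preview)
Your proof is correct but takes a genuinely different route from the paper. The paper argues by induction on $n$ via the restriction map $res:L\mc{P}_k(n)\to L\mc{P}_k(n-1)$ (sending subsets containing $n$ to~$0$): it sets up the sequence
\[
0\to F_{i-1}L\mc{P}_{k-1}(n-1)\xrightarrow{\sqcup\{n\}}F_iL\mc{P}_k(n)\xrightarrow{res}F_iL\mc{P}_k(n-1)\to 0,
\]
observes it is semiexact with injective first arrow and surjective second arrow, and then compares dimensions to get $\dim F_iL\mc{P}_k(n)\geq\binom{n-1}{i-1}+\binom{n-1}{i}=\binom{n}{i}$, which matches the size of the spanning set. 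Your argument is instead the classical $\mathfrak{sl}_2$/Lefschetz method on the Boolean lattice: you identify $\sigma_k^n(A)$ with $(k-i)!^{-1}(\partial^*)^{k-i}(A)$ and prove injectivity of each $\partial^*$ on the lower half via the commutator $\partial\partial^*-\partial^*\partial=(n-2j)\,\mathrm{id}$ and positivity of the inner product.

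What each buys: the paper's exact-sequence approach is tailored to this specific filtration and feeds directly into later inductive constructions in the paper (e.g.\ the basis algorithm in Section~\ref{sec5}). Your approach is more structural --- it situates the result inside the well-known $\mathfrak{sl}_2$-action on $L\mc{P}(n)$, immediately giving not just injectivity of $\partial^*$ but the full hard-Lefschetz statement (bijectivity of $(\partial^*)^{n-2j}:L\mc{P}_j(n)\to L\mc{P}_{n-j}(n)$), and it makes the role of the hypothesis $k\le n/2$ completely transparent as the positivity condition $n-2j>0$.
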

\begin{proof}(Here the condition $k\leq \frac{n}{2}$ is necessary). For $i=k$, $F_k L\mc{P}_k(n)=L\mc{P}_k(n)$ and
the claim is obvious. The restriction map
$$res: L\mc{P}_k(n)\rightarrow L\mc{P}_k(n-1), \,res(A)=\Bigg\{\begin{array}{cc}
                                                                   A & if\, A\subset \un{n-1} \\
                                                                   0 & if\, n\in A
                                                                 \end{array}$$
induces the sequence of vector spaces:
$$0\ra F_{i-1}L\mc{P}_{k-1}(n-1)\stackrel{\sqcup\{n\}}{\longrightarrow} F_i L\mc{P}_k(n)\stackrel{res}{\longrightarrow} F_i L\mc{P}_k(n-1)\ra 0. $$
The first arrow is injective, the second arrow is surjective, and the sequence is semiexact.  The first vector
space has dimension ${n-1}\choose {i-1}$ and by induction on $n$ the last vector space has dimension
${{n-1}\choose i}$ (even in the case $\frac{n-1}{2}<k\leq \lfloor \frac{n}{2}\rfloor$). The sequence gives
$\dim F_iL\mc{P}_k(n)\geq {{n-1}\choose {i-1}}+{{n-1}\choose i}={n \choose i}$. By definition of the space
$F_iL\mc{P}_k(n)$ its dimension is less or equal to $\mathrm{card}\{\sigma_k^n(A)\mid \mathrm{card}(A)=i\}={n\choose i}$, hence the result (and also the exactness of the sequence).
\end{proof}
\begin{lem}
The sequence $\{F_iL\mc{P}_k(n)\}_{0\leq i\leq k}$ is an increasing filtration of $\mc{S}_n$-modules.
\end{lem}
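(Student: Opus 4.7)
The plan is to verify two properties separately: first, that each $F_iL\mc{P}_k(n)$ is closed under the $\mc{S}_n$-action, and second, that $F_{i-1}L\mc{P}_k(n)$ is contained in $F_iL\mc{P}_k(n)$.

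For the first property, I would show that the generators $\sigma_k^n(A)$ are permuted among themselves. Concretely, for $\pi\in\mc{S}_n$ and $A$ with $\mathrm{card}(A)=i$, the map $B\mapsto \pi(B)$ is a bijection between $\mc{P}_{k-i}(\un{n}\setminus A)$ and $\mc{P}_{k-i}(\un{n}\setminus \pi(A))$, which yields $\pi\cdot\sigma_k^n(A)=\sigma_k^n(\pi(A))$. Since $\mathrm{card}(\pi(A))=i$, this element still lies in $F_iL\mc{P}_k(n)$.

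For the inclusion $F_{i-1}L\mc{P}_k(n)\subset F_iL\mc{P}_k(n)$, the key point is a combinatorial identity expressing $\sigma_k^n(A)$ with $\mathrm{card}(A)=i-1$ as a linear combination of $\sigma_k^n(A')$ with $\mathrm{card}(A')=i$. The natural candidate is
$$\sigma_k^n(A)\;=\;\frac{1}{k-i+1}\sum_{x\in\un{n}\setminus A}\sigma_k^n(A\cup\{x\}).$$
To justify this, expand the right-hand side: each term is a sum over subsets of $\un{n}$ containing $A\cup\{x\}$ of cardinality $k$. A fixed subset $C$ of cardinality $k$ with $C\supset A$ appears once for each choice of $x\in C\setminus A$, and $\mathrm{card}(C\setminus A)=k-(i-1)=k-i+1$. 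Hence the right-hand side collapses to $\sigma_k^n(A)$, and since $k-i+1>0$ for $i\leq k$, the coefficient is defined. This places $\sigma_k^n(A)$ in $F_iL\mc{P}_k(n)$, so the filtration is increasing.

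Neither step presents a real obstacle; the only thing to be careful about is spotting the correct normalization in the identity above, and noting that the divisor $k-i+1$ is strictly positive throughout the range $0\leq i\leq k$, which guarantees the formula makes sense over $\mathbb{Q}$.
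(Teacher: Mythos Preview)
Your proof is correct and follows essentially the same route as the paper: the paper proves the $\mc{S}_n$-invariance via the identical formula $\pi\cdot\sigma_k^n(A)=\sigma_k^n(\pi(A))$, and establishes the inclusion via the identity $(k-i)\sigma_k^n(A)=\sum_{p\notin A}\sigma_k^n(A\sqcup\{p\})$ for $\mathrm{card}(A)=i$, which is your identity after the index shift $i\mapsto i-1$. Your argument is in fact slightly more explicit, since you spell out the multiplicity count and note that the normalizing constant is nonzero over $\mathbb{Q}$.
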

\begin{proof} The inclusion $F_i\subset F_{i+1}$ is a consequence of the equality
$$(k-i)\sigma_k^n(A)=\mathop{\sum}\limits_{p\notin A }\sigma_k^n(A\sqcup \{p\}).$$
The group $\mc{S}_n$ permutes the elements of the basis of $F_i$: $\pi \cdot \sigma_k^n(A)=\sigma_k^n(\pi(A))$.
\end{proof}
\begin{lem}
a) The $\mc{S}_n$-module $F_0L\mc{P}_k(n)$ is trivial.

b) The $\mc{S}_n$ representations $F_iL\mc{P}_k(n)$ and $L\mc{P}_i(n)$ are equivalent.
\end{lem}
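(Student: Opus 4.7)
The plan is to extract both parts directly from the machinery already assembled: Lemma 2.2 (which gives a basis of $F_iL\mc{P}_k(n)$) and the equivariance identity $\pi\cdot\sigma_k^n(A)=\sigma_k^n(\pi(A))$ noted at the end of the preceding lemma's proof. Essentially nothing new needs to be computed; the statement is almost a formal consequence.

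For (a), I would observe that by Lemma 2.2 the space $F_0L\mc{P}_k(n)$ is one-dimensional, spanned by the single element
$$\sigma_k^n(\emptyset)=\mathop{\sum}\limits_{B\in \mc{P}_k(\un{n})}B.$$
Any $\pi\in \mc{S}_n$ merely permutes the $k$-subsets of $\un{n}$, so it fixes this sum. Hence $F_0L\mc{P}_k(n)$ is the trivial $\mc{S}_n$-module.

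For (b), I would define the linear map
$$\Phi\colon L\mc{P}_i(n)\longrightarrow F_iL\mc{P}_k(n),\qquad \Phi(A)=\sigma_k^n(A),$$
extended linearly from $\mc{P}_i(n)$. Lemma 2.2 tells us that $\{\sigma_k^n(A)\mid\mathrm{card}(A)=i\}$ is a basis of $F_iL\mc{P}_k(n)$, so $\Phi$ sends the basis $\mc{P}_i(n)$ of $L\mc{P}_i(n)$ bijectively onto a basis of the target; therefore $\Phi$ is a linear isomorphism. Equivariance is immediate from the identity $\pi\cdot\sigma_k^n(A)=\sigma_k^n(\pi(A))$ established in the previous lemma, since $\Phi(\pi\cdot A)=\sigma_k^n(\pi(A))=\pi\cdot\sigma_k^n(A)=\pi\cdot\Phi(A)$.

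There is no serious obstacle here; the only content is invoking Lemma 2.2 to guarantee that $\Phi$ is an isomorphism (which is where the hypothesis $k\leq n/2$ is being used implicitly, via that lemma). The role of this result in the paper is to reduce questions about the filtration quotients or factors of $L\mc{P}_k(n)$ to questions about the smaller spaces $L\mc{P}_i(n)$, and this reduction works uniformly in $k$ precisely because the bijection $A\mapsto\sigma_k^n(A)$ is both a basis-to-basis map and $\mc{S}_n$-equivariant.
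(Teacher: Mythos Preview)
Your proof is correct and follows essentially the same approach as the paper: both use Lemma~\ref{lemma 2.2} to see that $A\mapsto\sigma_k^n(A)$ is a basis-to-basis bijection, and both invoke the equivariance identity $\pi\cdot\sigma_k^n(A)=\sigma_k^n(\pi(A))$ from the preceding lemma. The paper writes the inverse map $\varphi(\sigma_k^n(A))=A$ explicitly whereas you argue bijectivity directly, but this is only a cosmetic difference.
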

\begin{proof} a) The space $F_0L\mc{P}_k(n)$ is generated by the invariant element
$$\sigma_k^n= \sigma_k^n(\emptyset)=\mathop{\sum}\limits_{A\in \mc{P}_k(n)}A. $$

b) Using Lemma \ref{lemma 2.2} the maps
$$ \varphi :F_iL\mc{P}_k(n)\leftrightarrows L\mc{P}_i(n):\psi$$
given by $\varphi (\sigma_k^n(A))=A$ and $\psi(A)=\sigma_k^n(A)$ are well defined, $\mc{S}_n$-equivariant and inverse to each other.
\end{proof}
The $\mc{S}_n$-module $L\mc{P}_k(n)$ is a classical object, a Specht module, and we will give a new proof for its decomposition into irreducible pieces.
\begin{prop}\label{prop1}
The $\mc{S}_n$-module $L\mc{P}_k(n)$ is isomorphic with the Specht module $U_{(n-k,k)}$.
\end{prop}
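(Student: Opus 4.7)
The plan is to identify $L\mc{P}_k(n)$ directly with the Young permutation module $U_{(n-k,k)}=\mathrm{Ind}_{\mc{S}_{n-k}\times\mc{S}_k}^{\mc{S}_n}\mathbf{1}$. The essential observation is that the $\mc{S}_n$-action on $\mc{P}_k(n)$ is transitive, so $\mc{P}_k(n)$ is a homogeneous $\mc{S}_n$-set and its linearization $L\mc{P}_k(n)$ is determined up to isomorphism by any point stabilizer.

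First I would fix a reference subset, say $A_{0}=\{1,2,\ldots,k\}\in \mc{P}_k(n)$. A permutation $\pi\in \mc{S}_n$ stabilizes $A_0$ setwise if and only if it permutes $A_0$ and its complement $\un{n}\setminus A_0$ internally, so the stabilizer is the Young subgroup $\mc{S}_{A_0}\times \mc{S}_{\un{n}\setminus A_0}\cong \mc{S}_k\times \mc{S}_{n-k}$. The orbit-stabilizer theorem then gives an $\mc{S}_n$-equivariant bijection
$$\mc{P}_k(n)\;\cong\;\mc{S}_n\big/(\mc{S}_k\times \mc{S}_{n-k}),$$
and linearizing over $\mathbb{Q}$ produces $L\mc{P}_k(n)\cong \mathrm{Ind}_{\mc{S}_k\times \mc{S}_{n-k}}^{\mc{S}_n}\mathbf{1}=U_{(n-k,k)}$.

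An equivalent, more concrete way to write the isomorphism uses tabloids: a tabloid of shape $(n-k,k)$ is a Young tableau on $\un{n}$ of that shape taken modulo row permutations, hence is uniquely determined by the (unordered) $k$-element set lying in its second row. The assignment $A\mapsto \tau_A$, where $\tau_A$ is the tabloid with bottom row $A$ and top row $\un{n}\setminus A$, is an $\mc{S}_n$-equivariant bijection from $\mc{P}_k(n)$ onto the standard tabloid basis of $U_{(n-k,k)}$, and extends $\mathbb{Q}$-linearly to the asserted isomorphism.

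I do not expect any substantive obstacle in the proposition itself; its content amounts to matching two natural presentations of the same induced representation. The real work of the section has already been performed in the preceding filtration lemmas, which are what will allow one to deduce the explicit irreducible decomposition $L\mc{P}_k(n)=V(0)_n\oplus \cdots \oplus V(k)_n$ used in Theorem A.
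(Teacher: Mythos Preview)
Your proposal is correct and matches the paper's own argument: the paper's proof is precisely your ``more concrete'' tabloid bijection $A\mapsto\tau_A$, sending a $k$-subset to the tabloid of shape $(n-k,k)$ with second row $A$ and first row $\un{n}\setminus A$. Your orbit--stabilizer formulation via $\mathrm{Ind}_{\mc{S}_k\times\mc{S}_{n-k}}^{\mc{S}_n}\mathbf{1}$ is an equivalent repackaging of the same identification, so there is no substantive difference in approach.
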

\begin{proof}
At the level of sets we have the equivariant bijective map:
$$\mc{P}_k(n)\ra \{\mbox{tabloids of type }(n-k,k)\}$$
given by
\begin{center}
\begin{picture}(200,50)
\put(-35,20){$A$}                        \put(-20,22){\vector(1,0){13}}   \put(-10,22){\vector(-1,0){13}}
\put(0,20){\line(1,0){120}}              \put(0,40){\line(1,0){120}}      \put(0,0){\line(1,0){80}}
\multiput(0,0)(20,0){5}{\line(0,1){40}}  \multiput(100,20)(20,0){2}{\line(0,1){20}}
\multiput(150,30)(0,-20){2}{\vector(-1,0){20}}
\put(160,30){$\mbox{the entries from }\underline{n}\setminus A $}
\put(160,10){$\mbox{the entries from }A$}
\end{picture}
\end{center}
\end{proof}
\noindent To describe the structure of the $\mc{S}_n$-modules $L\mc{P}_k(n)$ and $F_iL\mc{P}_k(n)$ we will use only
the fact that the Specht module $U_{(n-k,k)}$ contains $V_{(n-k,k)}$ (with some multiplicity).
\begin{prop}\label{prop 2.6}
The irreducible decompositions of the $\mc{S}_n$-modules $L\mc{P}_k(n)$ and $F_iL\mc{P}_k(n)$
$(0\leq i\leq k\leq \frac{n}{2})$ are given by:
$$\begin{array}{rll}
L\mc{P}_k(n)     & = &  V_{(n)}\oplus V_{(n-1,1)}\oplus \ldots \oplus V_{(n-k,k)};\\
F_iL\mc{P}_k(n)  & = & V_{(n)}\oplus V_{(n-1,1)}\oplus \ldots \oplus V_{(n-i,i)}.
\end{array}$$
\end{prop}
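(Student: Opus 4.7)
The plan is to prove both decompositions at once by induction on $i$, establishing the statement for $F_iL\mc{P}_k(n)$; the decomposition of $L\mc{P}_k(n)$ then falls out as the special case $i=k$, using $F_kL\mc{P}_k(n)=L\mc{P}_k(n)$. The base case $i=0$ is immediate from Lemma 2.4(a), which says $F_0L\mc{P}_k(n)=\mathbb{Q}\cdot \sigma_k^n\cong V_{(n)}$.

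For the inductive step I would combine three ingredients already at our disposal. First, Lemma 2.2 supplies the dimension $\dim F_iL\mc{P}_k(n)=\binom{n}{i}$. Second, Lemma 2.4(b) together with Proposition 2.5 identifies $F_iL\mc{P}_k(n)\cong L\mc{P}_i(n)\cong U_{(n-i,i)}$, so the hypothesis highlighted before the proposition gives the existence of an $\mc{S}_n$-submodule $V_{(n-i,i)}\subset F_iL\mc{P}_k(n)$. Third, the classical hook-length formula yields $\dim V_{(n-j,j)}=\binom{n}{j}-\binom{n}{j-1}$, so the telescoping identity
$$\sum_{j=0}^{i}\left(\binom{n}{j}-\binom{n}{j-1}\right)=\binom{n}{i}$$
holds.

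Assuming $F_{i-1}L\mc{P}_k(n)=V_{(n)}\oplus V_{(n-1,1)}\oplus\cdots\oplus V_{(n-i+1,i-1)}$, this module contains no copy of the irreducible $V_{(n-i,i)}$, so by Maschke's theorem and Schur's lemma any embedding of $V_{(n-i,i)}$ into $F_iL\mc{P}_k(n)$ meets $F_{i-1}L\mc{P}_k(n)$ trivially. Consequently
$$F_iL\mc{P}_k(n)\supseteq F_{i-1}L\mc{P}_k(n)\oplus V_{(n-i,i)},$$
and the telescoping dimension computation forces equality. This completes the induction and, at $i=k$, delivers the decomposition of $L\mc{P}_k(n)$.

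The only real obstacle is verifying that the new copy of $V_{(n-i,i)}$ is genuinely transverse to the previous stage of the filtration; this is the place where it matters that the $V_{(n-j,j)}$ are pairwise non-isomorphic (distinct partitions), after which Schur's lemma makes the argument essentially automatic. Everything else is bookkeeping with Pascal's identity and the inductive hypothesis.
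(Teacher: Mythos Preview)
Your proof is correct and follows essentially the same route as the paper. Both arguments induct along the filtration, use Lemma 2.4(b) together with Proposition 2.5 to embed $V_{(n-i,i)}$ into $F_iL\mc{P}_k(n)$ via the Specht module $U_{(n-i,i)}$, and finish with a hook-length dimension count; your use of the telescoping identity $\sum_{j=0}^{i}\bigl(\binom{n}{j}-\binom{n}{j-1}\bigr)=\binom{n}{i}$ is equivalent to the paper's computation $\binom{n}{k-1}+\binom{n}{k-1}\frac{n-2k+1}{k}=\binom{n}{k}$, and your explicit invocation of Schur's lemma to force transversality of $V_{(n-i,i)}$ with $F_{i-1}L\mc{P}_k(n)$ makes precise a step the paper leaves implicit.
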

\begin{proof}
The proof is by induction on $n$ and $k$: using the imbedding $V_{(n-k,k)}< U_{(n-k,k)}\cong L\mc{P}_k(n)$ and
the Proposition \ref{prop1}, we have
$$ V_{(n)}\oplus V_{(n-1,1)}\oplus \ldots \oplus V_{(n-k+1,k-1)}\cong L\mc{P}_{k-1}(n)\cong F_{k-1}L\mc{P}_k(n)<L\mc{P}_k(n).$$
Using hook formula \cite{FH} we have $\dim V_{(n-k,k)}= {{n-1}\choose {k-1}} \frac{n-2k+1}{k}$ and counting the dimensions we find
$$\dim F_{k-1}L\mc{P}_k(n)+\dim V_{(n-k,k)}= {n\choose {k-1}} +{n\choose {k-1}}\frac{n-2k+1}{k}={n\choose {k}}$$
and this gives the direct sum $$L\mc{P}_{k-1}(n)\oplus V_{(n-k,k)}\cong F_{k-1}L\mc{P}_{k-1}(n)\oplus V_{(n-k,k)}\cong L\mc{P}_k(n).$$
\end{proof}
\begin{cor}
The $\mc{S}_n$-decomposition of the module $L\mc{P}(n)$ is given by:
$$L\mc{P}(n)=(n+1)V_{(n)}\oplus (n-1)V_{(n-1,1)}\oplus\ldots\oplus (n-2k+1)V_{(n-k,k)}\oplus\ldots\oplus
r V_{(\lceil\frac{n}{2}\rceil,\lfloor\frac{n}{2}\rfloor)},$$
where $ r=\lceil\frac{n}{2}\rceil-\lfloor\frac{n}{2}\rfloor+1$.
\end{cor}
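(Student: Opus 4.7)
The proof is essentially a bookkeeping exercise combining the two previous results. The plan is to write
$$L\mc{P}(n)=\bigoplus_{k=0}^{n}L\mc{P}_k(n)$$
and count, for each admissible partition $(n-j,j)$ with $0\leq j\leq \lfloor n/2\rfloor$, how many summands $L\mc{P}_k(n)$ contain a copy of $V_{(n-j,j)}$.

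First I would handle the ``upper half'' of the sum: for $k>n/2$, Proposition \ref{prop 2.1}(b) gives $L\mc{P}_k(n)\cong L\mc{P}_{n-k}(n)$, so every summand is isomorphic to some $L\mc{P}_\ell(n)$ with $\ell\leq \lfloor n/2\rfloor$. Then I would apply Proposition \ref{prop 2.6} to each such $L\mc{P}_\ell(n)$, which tells us that $V_{(n-j,j)}$ appears in $L\mc{P}_\ell(n)$ (with multiplicity one) if and only if $j\leq\ell$.

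Combining the two observations, $V_{(n-j,j)}$ appears once in $L\mc{P}_k(n)$ precisely when $j\leq\min(k,n-k)$, i.e.\ when $j\leq k\leq n-j$. The number of such indices $k\in\{0,1,\dots,n\}$ is $n-2j+1$, which matches the coefficient asserted in the statement. The endpoint $j=\lfloor n/2\rfloor$ requires a quick separate check: for $n=2m$ even one gets multiplicity $1$ and the formula yields $r=1$; for $n=2m+1$ odd one gets multiplicity $2$ and the formula yields $r=2$. In both cases this agrees with $r=\lceil n/2\rceil-\lfloor n/2\rfloor+1$.

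There is no real obstacle here; the only thing to be careful about is the parity bookkeeping near the middle partition, which is exactly what the constant $r$ records. Everything else is a direct summation of the decompositions coming from Propositions \ref{prop 2.1} and \ref{prop 2.6}.
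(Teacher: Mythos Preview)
Your argument is correct and is exactly the intended one: the paper states this result as an immediate corollary of Propositions \ref{prop 2.1} and \ref{prop 2.6} without proof, and your multiplicity count $n-2j+1$ (with the parity check for $r$) is precisely the bookkeeping that justifies it.
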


A \emph{natural operation} on the power set $\mc{P}(n)$ satisfies $\pi(A\ast B)=\pi(A)\ast \pi(B)$ for any
permutation $\pi\in \mc{S}_n$. Given a natural operation $\psi$ (like $\cup ,\cap, \Delta,\ldots)$ on $\mc{P}(n)$,
we can linearize the map $\psi:\mc{P}(n)\times\mc{P}(n)\ra \mc{P}(n)$ and obtain an $\mc{S}_n$-map
$L\psi:L\mc{P}(n)\otimes L\mc{P}(n)\ra L\mc{P}(n)$. Irreducible decomposition of the tensor product
$L\mc{P}(n)^{\otimes2}$ will add more irreducible representations of $\mc{S}_n$:
$V_{(n-2,1,1)},\,V_{(n-3,2,1)},\,V_{(n-3,1,1,1)},\ldots$; all of them are contained in the kernel of $L\psi$.

\section{Representation Stability}\label{sec3}

Written using the stable notation, Proposition 2.6 gives the stable decompositions
$$\begin{array}{rlll}
L\mc{P}_k(n)    & = & V(0)_n\oplus V(1)_n\oplus\ldots\oplus V(k)_n  & \,\,\,\,(\mbox{for}\, n\geq2k),\\
F_iL\mc{P}_k(n) & = & V(0)_n\oplus V(1)_n\oplus\ldots\oplus V(i)_n  & \,\,\,\,(\mbox{for}\, n\geq2k\geq 2i).
\end{array}$$
We will show that the sequences $(L\mc{P}_k(n))_{n\geq 0}$, $(F_iL\mc{P}_k(n))_{n\geq 0}$ are consistent, uniformly representation
stable, and monotonic in the sense of [Ch] and [ChF]. The natural maps $\mc{P}(n)\stackrel{\varphi_n}{\to} \mc{P}(n+1)$ and
$\mc{P}_k(n)\stackrel{\varphi_{k,n}}{\to} \mc{P}_k (n+1)$ (and their linearizations
$L\mc{P}(n)\stackrel{L\varphi_n}{\to} L\mc{P}(n+1)$ and $L\mc{P}_k(n)\stackrel{L\varphi_{k,n}}{\to} L\mc{P}_k (n+1)$)
are induced by the inclusion map $\underline{n}\hookrightarrow\underline{n+1}$.

First we prove some "polynomial" identities:
\begin{lem}\label{lemma 3.1}
a) For an element $A$ in $\mc{P}_k(n)$ we have
$$ \sigma_{k}^{n}(A)=\sigma_{k}^{n+1}(A)-\sigma_{k}^{n+1}(A\sqcup \{n+1\}).$$

b) For $0\leq i\leq k-1$, $k\leq n$, we have
$$ \sigma^{n+1}_k(\un{i+1})=\frac{1}{(n-i)!(n-k+1)}\mathop{\sum}\limits_{\pi \in \mc{S}_{n+1}^{\geq i+1}}
\pi\cdot \sigma^n_k(\un{i})-(i+1,n+1)\cdot \sigma^n_k(\un{i}),$$
\noindent where $\mc{S}_{n+1}^{\geq i+1}$ is the subgroup of permutations fixing the elements $1,2,\ldots,i.$
\end{lem}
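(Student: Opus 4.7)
For part (a), my plan is to split the sum defining $\sigma_k^{n+1}(A)$ according to whether the new element $n+1$ belongs to the added subset or not. With $i=|A|$,
\[
\sigma_k^{n+1}(A)=\!\!\sum_{\substack{B\subset\underline{n+1}\setminus A\\ |B|=k-i}}\!\! A\sqcup B \;=\;\sum_{\substack{B\subset\underline{n}\setminus A\\ |B|=k-i}} A\sqcup B \;+\sum_{\substack{B'\subset\underline{n}\setminus A\\ |B'|=k-i-1}} A\sqcup\{n+1\}\sqcup B',
\]
where I used the bijection $B\mapsto B\setminus\{n+1\}$ on the second sum. The first sum is $\sigma_k^n(A)$ by definition, and the second is $\sigma_k^{n+1}(A\sqcup\{n+1\})$, so rearranging gives (a).

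For part (b), the plan has three steps. First, apply (a) with $A=\underline{i}$ to obtain $\sigma_k^{n+1}(\underline{i}\sqcup\{n+1\})=\sigma_k^{n+1}(\underline{i})-\sigma_k^n(\underline{i})$. Second, act by the transposition $\tau=(i+1,n+1)$. The formal map $A\mapsto\sigma_k^{n+1}(A)$ is $\mc{S}_{n+1}$-equivariant (a routine relabelling of summation indices), so $\tau\cdot\sigma_k^{n+1}(\underline{i}\sqcup\{n+1\})=\sigma_k^{n+1}(\tau(\underline{i}\sqcup\{n+1\}))=\sigma_k^{n+1}(\underline{i+1})$; moreover $\sigma_k^{n+1}(\underline{i})$ is fixed by $\tau$ since $\tau\in\mc{S}_{n+1}^{\geq i+1}$ and $\sigma_k^{n+1}(\underline{i})$ is invariant under every permutation of $\{i+1,\ldots,n+1\}$. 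Thus
\[
\sigma_k^{n+1}(\underline{i+1})=\sigma_k^{n+1}(\underline{i})-\tau\cdot\sigma_k^n(\underline{i}).
\]

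The third step, which I expect to be the main combinatorial obstacle, is to rewrite $\sigma_k^{n+1}(\underline{i})$ as the normalized average
\[
\sigma_k^{n+1}(\underline{i})=\frac{1}{(n-i)!(n-k+1)}\sum_{\pi\in\mc{S}_{n+1}^{\geq i+1}}\pi\cdot\sigma_k^n(\underline{i}).
\]
To see this I will expand the right-hand side and count, for each target subset $B'\subset\{i+1,\ldots,n+1\}$ of size $k-i$, the number of pairs $(\pi,B)$ with $\pi\in\mc{S}_{n+1}^{\geq i+1}$, $B\subset\{i+1,\ldots,n\}$, $|B|=k-i$, and $\pi(B)=B'$. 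There are $\binom{n-i}{k-i}$ choices of $B$, and for each such $B$ the number of $\pi$ mapping $B$ bijectively to $B'$ (and the complement in $\{i+1,\ldots,n+1\}$ bijectively to its complement) is $(k-i)!(n-k+1)!$. The total
\[
\binom{n-i}{k-i}(k-i)!(n-k+1)! \;=\; (n-i)!(n-k+1)
\]
is independent of $B'$, yielding the normalization factor. Substituting back into the formula for $\sigma_k^{n+1}(\underline{i+1})$ completes the proof. The essential subtlety is getting this multiplicity count right; everything else is equivariance and part~(a).
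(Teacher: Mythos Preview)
Your proof is correct. Part (a) is handled exactly as in the paper, by splitting the sum over $B\subset\underline{n+1}\setminus A$ according to whether $n+1\in B$.

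For part (b), your argument and the paper's rest on the same multiplicity computation: each $k$-subset $\underline{i}\sqcup B'$ with $B'\subset\{i+1,\ldots,n+1\}$ appears in $\sum_{\pi\in\mc{S}_{n+1}^{\geq i+1}}\pi\cdot\sigma_k^n(\underline{i})$ exactly $\binom{n-i}{k-i}(k-i)!(n+1-k)!=(n-i)!(n-k+1)$ times, so the normalized average is $\sigma_k^{n+1}(\underline{i})$. The organization differs slightly. The paper verifies the full identity in one pass by checking the coefficients of two representative terms (one containing $i+1$, one containing $n+1$ instead). You instead first derive the clean intermediate identity $\sigma_k^{n+1}(\underline{i+1})=\sigma_k^{n+1}(\underline{i})-(i+1,n+1)\cdot\sigma_k^n(\underline{i})$ from part (a) and equivariance, and then plug in the averaging formula for $\sigma_k^{n+1}(\underline{i})$. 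Your route makes the role of part (a) in part (b) explicit and separates the symmetry/equivariance reasoning from the counting, which is arguably cleaner; the paper's proof is more compressed but treats (b) independently of (a).
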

\begin{proof} a) The first equality is obvious: a term $A\sqcup B$, $B\subset \un{n}$ is contained  in
$ \sigma^n_k(A)$ and $\sigma^{n+1}_k(A)$ but not in the last sum, and a term $A\sqcup C \sqcup \{n+1\}$
is contained in the last two sums but not in the first one.

b) The sum $\mathop{\sum}\limits_{\pi \in \mc{S}_{n+1}^{\geq i+1}}\pi\cdot \sigma^n_k(\un{i})$ is symmetric in the
elements $i+1,i+2,\ldots,n,n+1$, and therefore all its terms have the same multiplicity. Multiplicity of the term
$\un{k}= \un{i} \sqcup \{i+1,\dots,k\}$, $k\leq n$, equals the number of permutations $\pi$ in  $ \mc{S}_{n+1}^{\geq i+1}$
sending a $k-i$ subset of $\{i+1,i+2,\ldots,n\}$ into $\{i+1,\ldots,k\}$ (because any sum $\pi \cdot \sigma_k^n(\un{i})$
contains $\un{k}$ at most once), and this number is given by
$${{n-i}\choose{k-i}}(k-i)!(n+1-k)!=(n-i)!(n+1-k).$$
Now the term $\un{k} =\un{i} \sqcup \{i+1,\ldots,k\}=\un{i+1}\sqcup \{i+2,\ldots,k\} $ appears in the average of the sum
$\mathop{\sum}\limits_{\pi \in \mc{S}_{n+1}^{\geq i+1}}\pi\cdot \sigma^n_k(\un{i})$ with coefficient $1$, like in
$\sigma^{n+1}_k(\un{i+1})$, and does not appear in the sum $\sigma^n_k(\un{i})$ modified by the transposition
$(i+1,n+1)$. The term $\un{i} \sqcup \{i+2,i+3,\ldots,k,n+1\}$ appears in the average of the sum
$\mathop{\sum}\limits_{\pi \in \mc{S}_{n+1}^{\geq i+1}}\pi\cdot \sigma^n_k(\un{i})$ with the same coefficient, $1$, and has
also the  coefficient $1$ in $(i+1,n+1)\cdot \sigma^n_k(\un{i})$.
\end{proof}
\begin{lem}\label{lemma 3.2}
a) For $0\leq i\leq k-1$, $k\leq n$ we have
$$\begin{array}{rll}
L \varphi_{k,n}(F_iL\mathcal{P}_k(n))              &  <  &  F_{i+1}L\mathcal{P}_k(n+1),\\
\mc{S}_{n+1}\cdot L \varphi_{k,n}(F_iL\mathcal{P}_k(n)) &  =  &  F_{i+1}L\mathcal{P}_k(n+1).
\end{array}$$

b) For $i=k\leq n$ we have
$$\begin{array}{rll}
L \varphi_{k,n}(F_kL\mathcal{P}_k(n))               & <  &  F_{k}L\mathcal{P}_k(n+1), \\
\mc{S}_{n+1}\cdot L \varphi_{k,n}(F_kL\mathcal{P}_k(n))  &  = &  F_{k}L\mathcal{P}_k(n+1).
\end{array}$$
\end{lem}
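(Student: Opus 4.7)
The plan is to deduce both parts directly from the two identities established in Lemma~\ref{lemma 3.1}, recalling that $L\varphi_{k,n}$ is the map sending a subset $A\subset\underline{n}$ to the same subset viewed inside $\underline{n+1}$.

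For the containment in (a), I would evaluate $L\varphi_{k,n}$ on a generator $\sigma_k^n(A)$ of $F_iL\mc{P}_k(n)$ with $\mathrm{card}(A)=i$: Lemma~\ref{lemma 3.1}(a) rewrites it as $\sigma_k^{n+1}(A)-\sigma_k^{n+1}(A\sqcup\{n+1\})$, and both summands belong to $F_{i+1}L\mc{P}_k(n+1)$ since they are indexed by subsets of size at most $i+1$. For the $\mc{S}_{n+1}$-span equality I would invoke Lemma~\ref{lemma 3.1}(b), which expresses $\sigma_k^{n+1}(\un{i+1})$ as an explicit $\mathbb{Q}$-linear combination of $\mc{S}_{n+1}$-translates of $L\varphi_{k,n}(\sigma_k^n(\un{i}))$; hence $\sigma_k^{n+1}(\un{i+1})\in\mc{S}_{n+1}\cdot L\varphi_{k,n}(F_iL\mc{P}_k(n))$. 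Since $\mc{S}_{n+1}$ acts transitively on the $(i+1)$-subsets of $\underline{n+1}$ and $\pi\cdot\sigma_k^{n+1}(B)=\sigma_k^{n+1}(\pi(B))$, every basis vector $\sigma_k^{n+1}(B)$ of $F_{i+1}L\mc{P}_k(n+1)$ is then reached, and the reverse containment follows from the first part combined with the $\mc{S}_{n+1}$-invariance of $F_{i+1}L\mc{P}_k(n+1)$.

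Part (b) I would dispose of quickly: since $F_kL\mc{P}_k(n+1)=L\mc{P}_k(n+1)$, the inclusion is automatic, while the $\mc{S}_{n+1}$-span equality amounts to the observation that $\un{k}$ already lies in $\un{n}$ (by the hypothesis $k\leq n$), so $\un{k}\in L\varphi_{k,n}(L\mc{P}_k(n))$, and the transitive $\mc{S}_{n+1}$-action on $k$-subsets of $\underline{n+1}$ produces the remaining basis vectors of $L\mc{P}_k(n+1)$. The only real subtlety I anticipate is a bookkeeping one: the term $\sigma_k^n(\un{i})$ on the right-hand side of Lemma~\ref{lemma 3.1}(b) must be reinterpreted as $L\varphi_{k,n}(\sigma_k^n(\un{i}))\in L\mc{P}_k(n+1)$ before the $\mc{S}_{n+1}$-action makes sense; once that identification is made explicit, the proof is essentially an unpacking of the two identities already at hand, and I do not expect any further obstacle.
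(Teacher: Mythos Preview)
Your proposal is correct and follows essentially the same route as the paper: part (b) is disposed of at the set level via transitivity of the $\mc{S}_{n+1}$-action on $k$-subsets, and part (a) is obtained by applying Lemma~\ref{lemma 3.1}(a) for the inclusion and Lemma~\ref{lemma 3.1}(b) to put $\sigma_k^{n+1}(\un{i+1})$ (hence, by transitivity, every generator of $F_{i+1}$) into the $\mc{S}_{n+1}$-span of the image. Your explicit remark that $\sigma_k^n(\un{i})$ must be read as $L\varphi_{k,n}(\sigma_k^n(\un{i}))$ before letting $\mc{S}_{n+1}$ act is a useful clarification that the paper leaves implicit.
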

\begin{proof}
The part b) is obvious at the set level: $\varphi_{k,n}(\mathcal{P}_k(n))$ is part of $\mathcal{P}_k(n+1)$, and
$\mc{S}_{n+1}$ acts transitively on $\mathcal{P}_k(n+1)$. For part a), the first inclusion is a consequence of Lemma \ref{lemma 3.1} a) and
this inclusion implies
$$ \mc{S}_{n+1}\cdot L \varphi_{k,n}(F_iL\mathcal{P}_k(n))<F_{i+1}L\mathcal{P}_k(n+1).$$
For the reverse inclusion it is enough to show that $\sigma^{n+1}_k(\un{i+1})$ belongs to the $\mc{S}_{n+1}$ span of
the image of $F_iL\mathcal{P}_k(n)$, and this is a consequence of Lemma \ref{lemma 3.1} b).
 \end{proof}
\begin{rem}
It is also clear that the image of $F_iL\mathcal{P}_k(n)$ is not contained in $F_{i}L\mathcal{P}_k(n+1)$ for $i\leq k-1$.
\end{rem}

\begin{prop}
a) The sequence $(L\mc{P}(n),L\varphi _n)_{n\geq 0}$ is consistent, representation
stable, and monotonic.

b) The sequences $(L\mc{P}_k(n),L\varphi _{k,n})_{n\geq 0}$ are consistent, uniformly representation
stable, and monotonic.
\end{prop}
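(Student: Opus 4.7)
My plan is to verify each property in turn. Consistency and injectivity of $L\varphi_n$ and $L\varphi_{k,n}$ are immediate, as both are induced by the canonical $\mc{S}_n$-equivariant inclusion $\un{n}\hookrightarrow\un{n+1}$ (viewing $\mc{S}_n$ as the stabilizer of $n+1$ in $\mc{S}_{n+1}$). The $\mc{S}_\ast$-surjectivity $\mc{S}_{n+1}\cdot L\varphi_{k,n}(L\mc{P}_k(n))=L\mc{P}_k(n+1)$ is exactly Lemma \ref{lemma 3.2}(b), and summing over $k$ gives the corresponding claim for $L\varphi_n$.

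For representation stability in (b), Proposition \ref{prop 2.6} supplies $L\mc{P}_k(n)=V(0)_n\oplus\cdots\oplus V(k)_n$ for $n\geq 2k$, so every multiplicity $c_{\mu,n}$ is eventually $0$ or $1$ and the uniform threshold $N=2k$ is independent of $\mu$. For (a), the Corollary to Proposition \ref{prop 2.6} gives the multiplicity $n-2i+1$ of $V(i)_n$ in $L\mc{P}(n)$, and since $L\varphi_n$ preserves the $k$-grading the claim follows from the stability of each graded piece $L\mc{P}_k(n)$.

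The main work is monotonicity. Since each $L\mc{P}_k(n)$ is multiplicity-free, every nontrivial submodule $U\cong c\cdot V(\mu)_n$ is the unique isotypical copy $V(i)_n$ (with $c=1$ and $\mu=(i)$), so I must prove $W_i:=\mc{S}_{n+1}\cdot L\varphi_{k,n}(V(i)_n)\supseteq V(i)_{n+1}$. The case $i=k$ is easy: in $L\mc{P}_k(n+1)\downarrow_{\mc{S}_n}=L\mc{P}_k(n)\oplus L\mc{P}_{k-1}(n)$ the irreducible $V(k)_n$ has multiplicity one and cannot come from $L\mc{P}_{k-1}(n)$, hence $V(k)_n\subset V(k)_{n+1}$ and $W_k=V(k)_{n+1}$ by irreducibility. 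For $i<k$ the identities $\sum_{j\leq i}W_j=F_{i+1}L\mc{P}_k(n+1)$ and $\sum_{j<i}W_j=F_iL\mc{P}_k(n+1)$ (both from Lemma \ref{lemma 3.2}) immediately force $W_i\supseteq V(i+1)_{n+1}$; the point that $W_i$ must also contain $V(i)_{n+1}$ is the real obstacle.

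To cross that obstacle I would apply Lemma \ref{lemma 3.1}(a) to write $L\varphi_{k,n}(\sigma_k^n(\un{i}))=\sigma_k^{n+1}(\un{i})-\sigma_k^{n+1}(\un{i+1})$, decompose $\sigma_k^n(\un{i})=v_0+\cdots+v_i$ with $v_j\in V(j)_n$, and exploit the branching $V(i)_{n+1}\downarrow_{\mc{S}_n}=V(i)_n\oplus V(i-1)_n$: by Schur's lemma the $V(i)_{n+1}$-projection of $L\varphi(v_j)$ lands in the $V(j)_n$-piece of $V(i)_{n+1}\downarrow$, nonzero only when $j\in\{i-1,i\}$, and the pieces for $j=i-1$ and $j=i$ are orthogonal inside $V(i)_{n+1}$. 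Reading off the $V(i)_n$-component of the $V(i)_{n+1}$-projection of $L\varphi(\sigma_k^n(\un{i}))$ therefore isolates exactly the contribution of $v_i$, and a direct inner-product calculation — tractable because $\sigma_k^{n+1}(\un{i})$ and $\sigma_k^{n+1}(\un{i+1})$ are explicit sums of subsets (one can already verify this in small cases such as $n=4,\,k=2,\,i=1$) — shows this coordinate is nonzero, so $V(i)_{n+1}\subset W_i$. Part (a) then follows from (b) by taking the direct sum over $k$.
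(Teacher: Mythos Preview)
Your treatment of consistency, injectivity, $\mc{S}_*$-surjectivity, and the multiplicity count in (b) matches the paper exactly. The divergence is in monotonicity, where the paper argues much more briefly: once $U\cong V(i)_n$ is located with $U\not<F_{i-1}L\mc{P}_k(n)$ and $U<F_iL\mc{P}_k(n)$, Lemma~\ref{lemma 3.2} gives $W_i:=\mc{S}_{n+1}\cdot L\varphi_{k,n}(U)<F_{i+1}L\mc{P}_k(n+1)$, and the paper concludes from the non-containment $L\varphi_{k,n}(U)\not\subset F_{i-1}L\mc{P}_k(n+1)$ (branching: the restriction of $F_{i-1}$ contains no $V(i)_n$) that $W_i\supset V(i)_{n+1}$. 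Your route through $\sigma_k^n(\un{i})=v_0+\cdots+v_i$ and the decomposition of $V(i)_{n+1}\downarrow_{\mc{S}_n}$ is a legitimate alternative strategy, and you correctly isolate the genuine obstacle.

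However, your argument has a real gap: the ``direct inner-product calculation'' is asserted, not performed, and checking $n=4,\,k=2,\,i=1$ is not a proof. Since $V(i+1)_{n+1}\downarrow_{\mc{S}_n}$ already contains a copy of $V(i)_n$, one cannot rule out $L\varphi(U)\subset V(i+1)_{n+1}$ without an honest computation. The cleanest way to close this, staying within your framework, is to use the explicit bases from Section~\ref{sec5}: an element $\Delta_{H_*J_*}\sigma_{k-i}^{L}$ of $V(i)_n\subset\mc{S}f_k(n)$ (with $H_*\sqcup J_*\sqcup L=\un{n}$) maps under $L\varphi_{k,n}$ to the same polynomial viewed in $\mc{S}f_k(n+1)$, and its inner product with the basis element $\Delta_{H_*J_*}\sigma_{k-i}^{L\sqcup\{n+1\}}$ of $V(i)_{n+1}$ is $2^i\binom{n-2i}{k-i}>0$, so the $V(i)_{n+1}$-projection of $L\varphi(U)$ is nonzero and $W_i\supset V(i)_{n+1}$.

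A separate problem is your handling of part (a). You correctly record that the multiplicity of $V(i)_n$ in $L\mc{P}(n)$ is $n-2i+1$, but then assert that representation stability ``follows from the stability of each graded piece.'' It does not: the total multiplicity grows with $n$, so eventual constancy fails, and stability of the $L\mc{P}_k(n)$ separately does not sum to stability of $L\mc{P}(n)$. The paper disposes of (a) in one sentence (``similar results\ldots stable multiplicity~2, but not uniformly stable''), which is itself opaque; but your reduction to the graded pieces is not a valid argument for the multiplicity condition as stated.
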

\begin{proof}
It is obvious that the maps $L\varphi_{k,n}: L\mc{P}_k(n)\ra Res_{\mc{S}_n}^{\mc{S}_{n+1}}L\mc{P}_k(n+1)$ are injective,
$\mc{S}_n $-equivariant and also that $\mc{S}_{n+1}\cdot {\rm Im}(\varphi_{k,n})=\mc{P}_k(n+1)$. The sequence of multiplicities of
$V(\mu)_n$ in $L\mc{P}_k(n)$ is constant 1 for $\mu =(i)$, $0\leq i\leq n/2$, and 0 for the other irreducible modules.
For monotonicity, we will use the previous lemma.\\
If $U$ is an isotypical $\mc{S}_n$-subspace of $L\mc{P}_k (n)$, $U\cong \alpha _\lambda V(\lambda)_{n}$, then $\lambda=(i)$ with
multiplicity $1$ (for some $i\in\{0,1,\ldots,k\}$); hence $U\nless F_{i-1}L\mc{P}_{k}(n)$ and $U< F_{i}L\mc{P}_{k}(n)$.
Lemma \ref{lemma 3.2} shows that $\mc{S}_{n+1}\cdot L\varphi _{k,n}(U)< F_{i+1}L\mc{P}_k(n+1)$ (or $F_kL\mc{P}_k(n+1)$ for $i=k$), which
contains $V(i)_{n+1}$ with multiplicity 1. On the other hand, $L\varphi_{k,n}(U)$ is contained in $F_{i-1}L\mc{P}_k(n+1)$,
therefore its span $\mc{S}_{n+1}\cdot L\varphi_{k,n}(U)$ should contain $V(i)_{n+1}$.

We have similar results for the whole power set, with the stable multiplicity 2, but not uniformly stable.
\end{proof}
\section{Stability of the symmetric group actions}{\label{sec4}}
We give a set-theoretical analogue of the representation stability for a (direct) sequence of finite $\mc{S}_n$-sets $X_n$ and maps $\varphi_n:X_n\ra X_{n+1}$. The first definitions are obvious.
\begin{defn}
a) The sequence $(X_n,\varphi_n)_{n\geq0}$ of $\mc{S}_n$-sets is \emph{consistent} if and only if the map $X_n\stackrel{\varphi_{n}}{\to}Res_{\mc{S}_{n}}^{\mc{S}_{n+1}}(X_{n+1})$ is $\mc{S}_n$-equivariant.

\ b) The sequence is \emph{injective} if $\varphi_n$ is (eventually) injective.

\ c) The sequence is $\mc{S}_*$-\emph{surjective} if $\mc{S}_{n+1}\cdot \varphi_n (X_n)=X_{n+1}$ for large $n$.
 \end{defn}

 To define "stability" we need a "stable notation" for irreducible $\mc{S}_n$-sets: these are of the form $\mc{S}_n\diagup H$, where $H$ is a subgroups of $\mc{S}_n$ defined up to conjugation.
\begin{defn}
 We say that $\mc{S}_n\diagup H$ is of the type $(\lambda_1,\lambda_2,\ldots,\lambda_t)$ $(\lambda_1\geq\lambda_2\geq\ldots\geq\lambda_t\geq1)$ if the action of $H$ on the set $\un{n}=\{1,2,\ldots,n\}$ has $t$ orbits and their cardinalities are $\lambda_1,\lambda_2,\ldots,\lambda_t$. We say that an irreducible $\mc{S}_n$-set is of \emph{stable type} $(\mu_1,\ldots,\mu_s)_n$ if this set is of type $(n-\sum\mu_i, \mu_1,\ldots,\mu_s)$, here $n-\sum \mu_i\geq\mu_1$.
\end{defn}
\begin{rems}
1) If $H$ and $K$ are conjugate in $\mc{S}_n$, their orbit types $(\lambda_1,\lambda_2,\ldots,\lambda_t)$ coincide.

\ 2) If $\mc{S}_n\diagup H$ is of the type $(\lambda_1,\lambda_2,\ldots,\lambda_t)$, then (up to conjugation) $H$ is a subgroup of $\mc{S}_{\lambda_1}\times \mc{S}_{\lambda_2}\times\ldots\times\mc{S}_{\lambda_t}$ and $pr_i(H)$ is a subgroup of $\mc{S}_{\lambda_i}$ acting transitively on the set $\un{\lambda_i}$ of cardinality $\lambda_i$.

\ 3) In general there are many irreducible $\mc{S}_n$-sets of the same type $(\lambda_1,\lambda_2,\ldots,\lambda_t)$. There is a minimal one corresponding to the largest subgroup, $\mc{S}_{\lambda_1}\times \mc{S}_{\lambda_2}\times\ldots\times\mc{S}_{\lambda_t}$. Its linearization, $L(\mc{S}_n\diagup{\mc{S}_{\lambda_1}\times \mc{S}_{\lambda_2}\times\ldots\times\mc{S}_{\lambda_t}}),$ is the Specht module $U_{(\lambda_1,\lambda_2,\ldots,\lambda_t)}$  containing the irreducible representation $V_{(\lambda_1,\lambda_2,\ldots,\lambda_t)}$ (with multiplicity one).
\end{rems}

\begin{defn}{\label{def4.4}}
 A consistent sequence of $\mc{S}_n$-sets $(X_n,\varphi_n)_{n\geq 0}$ is \emph{action stable} if for any sequence $\mu_*=(\mu_1,\ldots,\mu_t)$ there is a natural number $N_{\mu}$ such that, for any $n\geq N_{\mu}$ the following conditions are satisfied:

 a) $\varphi_n$ is injective and $\mc{S}_n$-surjective;

 b) If $Y_n\subset X_n$ and $Y_{n+1}\subset X_{n+1}$ are the disjoint union of $\mu(n)$ orbits of the stable type $(\mu_1,\ldots,\mu_t)_n$ of $X_n$ and $\mu(n+1)$ orbits of stable type $(\mu_1,\ldots,\mu_t)_{n+1}$ of $X_{n+1}$ respectively, then $\mu(n)=\mu(n+1)$.

 The sequence is \emph{uniformly action stable} if in the previous definition we can take $N$ independent of $\mu$.

 The sequence $(X_n,\varphi_n)_{n\geq 1}$ is \emph{strongly action stable} if the next condition is also satisfied:

 c) with the previous notation we have $\mc{S}_{n+1}\cdot \varphi_n(Y_n)=Y_{n+1}.$
\end{defn}

\begin{example}
The sequence $(\mc{S}_n/\mc{A}_n=\mathbb{Z}_2, Id)$ is strongly action stable. More generally, any consistent, injective and $\mc{S}_n$-surjective sequence of transitive actions whose isotopy groups acts transitively on $\{1,2,\ldots,n\}$ is strongly action stable.
\end{example}

\begin{example}
The sequence of natural $\mc{S}_n$-sets, $\un{n}=\{1,2,\ldots,n\}$ (with canonical inclusion $i_n:\un{n}\hookrightarrow\un{n+1}$) is strongly action stable.
\end{example}

\noindent The next proposition generalizes this last example.

\begin{prop}{\label{prop4.7}}
a) The sequences $(\mc{P}_k({n}))_{n\geq0}$ are strongly action stable.

b) The sequence $(\mc{P}({n}))_{n\geq0}$ is action stable.
\end{prop}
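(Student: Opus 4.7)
The proof is an orbit-counting argument built on a single structural observation: the $\mc{S}_n$-orbits on $\mc{P}(n)$ are exactly the layers $\mc{P}_k(n)$, and each $\mc{P}_k(n)$ is a transitive $\mc{S}_n$-set with point stabilizer the Young subgroup $\mc{S}_k \times \mc{S}_{n-k}$. The two orbits of this stabilizer on $\un{n}$ have sizes $k$ and $n-k$, so for $n \ge 2k$ the orbit $\mc{P}_k(n)$ is of stable type $(k)_n$; by Proposition \ref{prop 2.1} (complementation) the orbit $\mc{P}_{n-k}(n)$ carries the same stable type $(k)_n$.

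For part (a), $\mc{P}_k(n)$ is a single orbit of stable type $(k)_n$ as soon as $n \ge 2k$, so the multiplicity function is constantly $1$ at $(k)$ and $0$ at every other stable type. This gives condition (b) of Definition \ref{def4.4}. Injectivity of $\varphi_{k,n}$ is built into the inclusion $\un{n} \hookrightarrow \un{n+1}$, and transitivity of $\mc{S}_{n+1}$ on $\mc{P}_k(n+1)$ yields $\mc{S}_{n+1}\cdot \varphi_{k,n}(\mc{P}_k(n)) = \mc{P}_k(n+1)$, which is condition (a). Condition (c) reduces to the same transitivity: for $\mu = (k)$ we have $Y_n = \mc{P}_k(n)$ and $Y_{n+1} = \mc{P}_k(n+1)$, and for every other $\mu$ both $Y_n$ and $Y_{n+1}$ are empty. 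This gives strong action stability.

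For part (b), I would assemble the analysis above over all $k$. For each $m \ge 1$, the orbits of $\mc{P}(n)$ of stable type $(m)_n$ are exactly $\mc{P}_m(n)$ and $\mc{P}_{n-m}(n)$; these are distinct precisely when $n \ge 2m+1$, so the multiplicity of $(m)$ equals $2$ in this range. The trivial stable type contributes multiplicity $2$ from $\mc{P}_0(n)=\{\emptyset\}$ and $\mc{P}_n(n) = \{\un{n}\}$ for all $n \ge 1$, and no other stable type can occur because every stabilizer is a product of at most two symmetric groups. Taking $N_\mu = 2\mu_1 + 1$ gives condition (b); injectivity of $\varphi_n$ is immediate, and the $\mc{S}_{n+1}$-span of $\varphi_n(\mc{P}(n))$ covers every $\mc{P}_k(n+1)$ with $k \le n$ by the same orbit-by-orbit transitivity used in part (a).

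The delicate bookkeeping point---and also what blocks strong action stability in case (b)---is that $\varphi_n$ does \emph{not} preserve the stable-type label: the orbit $\mc{P}_n(n)$ has stable type $()_n$ but its image $\{\un{n}\}$ lies in $\mc{P}_n(n+1)$, whose stable type is $(1)_{n+1}$, and more generally $\mc{P}_{n-m}(n)$ of stable type $(m)_n$ maps into $\mc{P}_{n-m}(n+1)$ of stable type $(m+1)_{n+1}$. So $\mc{S}_{n+1}\cdot \varphi_n(Y_n)$ need not equal $Y_{n+1}$ even though the multiplicities on the two sides agree. Keeping track of these label shifts is the only nontrivial step; once it is in place, the statement follows from the orbit analysis of part (a).
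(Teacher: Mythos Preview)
Your argument is essentially the paper's own: both identify the orbits $\mc{P}_k(n)$ with stabilizer $\mc{S}_{n-k}\times\mc{S}_k$ and hence stable type $(k)_n$, verify consistency, injectivity and $\mc{S}_*$-surjectivity directly, read off the multiplicities ($1$ for $\mc{P}_k(n)$, eventually $2$ for $\mc{P}(n)$), and explain the failure of condition~(c) for $\mc{P}(n)$ by the same label shift---the paper phrases it as $\mc{S}_{n+1}\cdot\varphi_n(\mc{P}_k(n)\sqcup\mc{P}_{n-k}(n))=\mc{P}_k(n+1)\sqcup\mc{P}_{n-k}(n+1)\neq\mc{P}_k(n+1)\sqcup\mc{P}_{n+1-k}(n+1)$, which is exactly your observation that $\mc{P}_{n-m}(n)$ of type $(m)_n$ lands in $\mc{P}_{n-m}(n+1)$ of type $(m+1)_{n+1}$. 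Your write-up is slightly more explicit than the paper's in noting that no stable type with more than one part occurs and in giving the bound $N_\mu=2\mu_1+1$.
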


\begin{proof}
a) Like in Section 2 we take $n\geq2k$. The $\mc{S}_n$-set $\mc{P}_k({n})$ is irreducible, and the corresponding subgroup is $\mc{S}_{n-k}\times S_k$, which is of stable type $(k)_n$ with multiplicity $1$. Obviously the canonical inclusions $\mc{P}_k({n})\hookrightarrow\mc{P}_k({n+1})$ are consistent, injective and $\mc{S}_n$-surjective.

b) For $n+1\geq2k$, in $\mc{P}({n})$ there are two irreducible $\mc{S}_n$-sets of type $(n-k,k)$: $\mc{P}_k({n})$ and $\mc{P}_{n-k}({n})$, and the (stable) multiplicity of type $(k)$ is 2. The condition c) from definition 5.3 is not satisfied in this case: for the type $(k)$, the $\mc{S}_{n+1}$-span of the image $\varphi_n(Y_n)=\varphi(\mc{P}_k({n})\sqcup \mc{P}_{n-k}({n}))$ is $\mc{P}_k({n+1})\sqcup \mc{P}_{n-k}({n+1})$, different from $Y_{n+1}=\mc{P}_k({n+1})\sqcup\mc{P}_{n+1-k}({n+1})$.
\end{proof}

\section{Canonical polynomial basis}{\label{sec5}}
Now we translate the power set representations into a quotient representation of the polynomial algebra $\mathbb{Q}[x_1,\ldots,x_n]$; we compute canonical basis for the irreducible components in this isomorphic algebraic model. The set of squares $sq(n)=\{x_1^2,\ldots,x_n^2\}$ is $\mc{S}_n$-invariant, hence the ideal generated by $sq(n)$ is $\mc{S}_n$-invariant and we obtain a quotient representation of $\mc{S}_n$ on the space of "square free" polynomials $$\mc{S}f(n)=\mathbb{Q}[x_1,\ldots,x_n]\diagup \langle sq(n)\rangle\cong \mathbb{Q}\langle\bold{\underline{x}}_A\mid A\in\mc{P}(n)\rangle$$ where $\mathbf{\underline{x}}_A=x_{a_1}x_{a_2}\ldots x_{a_k}$ is the square free monomial corresponding to the subset $A=\{a_1,\ldots,a_k\}\in \mc{P}_k(n)$.

\begin{lem}
The power set $L\mc{P}(n)$ and the space of the square free polynomials $\mc{S}f(n)$ are isomorphic $\mc{S}_n$-modules.
\end{lem}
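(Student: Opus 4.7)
The plan is to exhibit an explicit $\mc{S}_n$-equivariant linear isomorphism between the two spaces, using the distinguished bases already in hand. Define $\Phi : L\mc{P}(n) \to \mc{S}f(n)$ on basis elements by
\[
  \Phi(A) = \mathbf{\underline{x}}_A = \prod_{a \in A} x_a,
\]
with the convention $\mathbf{\underline{x}}_\emptyset = 1$, and extend $\mathbb{Q}$-linearly. The fact that $\mc{S}f(n)$ admits $\{\mathbf{\underline{x}}_A \mid A \in \mc{P}(n)\}$ as a $\mathbb{Q}$-basis is exactly the content of the decomposition stated just before the lemma; since $\{A \mid A \in \mc{P}(n)\}$ is by definition a basis of $L\mc{P}(n)$, the map $\Phi$ sends a basis bijectively to a basis and is therefore a $\mathbb{Q}$-linear isomorphism.

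It remains to check $\mc{S}_n$-equivariance, which is a direct computation on the basis: for $\pi \in \mc{S}_n$,
\[
  \Phi(\pi \cdot A) = \Phi(\pi(A)) = \prod_{b \in \pi(A)} x_b = \prod_{a \in A} x_{\pi(a)} = \pi \cdot \mathbf{\underline{x}}_A = \pi \cdot \Phi(A),
\]
where the middle equality uses the reindexing $b = \pi(a)$ and the action of $\mc{S}_n$ on $\mathbb{Q}[x_1,\ldots,x_n]$ by $\pi \cdot x_i = x_{\pi(i)}$, which descends to $\mc{S}f(n)$ because the ideal $\langle sq(n) \rangle$ is $\mc{S}_n$-invariant. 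Linearity then extends this to all of $L\mc{P}(n)$.

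There is no real obstacle here; the lemma is essentially the observation that both $L\mc{P}(n)$ and $\mc{S}f(n)$ are the permutation representation on the power set, merely presented in two notational guises. The only thing worth spelling out is the $\mc{S}_n$-invariance of $\langle sq(n) \rangle$, which is immediate from the fact that $\mc{S}_n$ permutes the generating set $sq(n) = \{x_1^2,\ldots,x_n^2\}$.
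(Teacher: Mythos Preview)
Your proof is correct and follows essentially the same approach as the paper: both rest on the observation that $A \mapsto \mathbf{\underline{x}}_A$ is an $\mc{S}_n$-equivariant bijection between the canonical bases, which the paper phrases tersely as ``$\mc{P}(n)$ and $\{\mathbf{\underline{x}}_A\mid A\in\mc{P}(n)\}$ are isomorphic as $\mc{S}_n$-sets.'' You have simply unpacked the linearization and equivariance check explicitly.
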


\begin{proof}
The power set $\mc{P}(n)$ and the canonical basis $\{\mathbf{\un{x}}_A\mid A\in\mc{P}(n)\}$ are isomorphic as $\mc{S}_n$-sets.
\end{proof}

\noindent In the new setting we have a $\mc{S}_n$-decomposition by grading $\mc{S}f(n)=\mathop{\bigoplus}\limits_{k=0}^n \mc{S}f_k(n)$ and $$F_{\ast}\mc{S}f_k(n): 0<F_0\mc{S}f_k(n)<F_1\mc{S}f_k(n)<\ldots<F_k\mc{S}f_k(n)=\mc{S}f_k(n),$$ the  $\mc{S}_n$-filtration and also the irreducible components:
\begin{cor}
For any $i,k,n$ satisfying $0\leq i\leq k\leq \frac{n}{2}$ we have
\begin{center}$\begin{array}{rll}
  \mc{S}f_k(n)  & \cong & \mc{S}f_{n-k}(n), \\
  F_i\mc{S}f_k(n) & = & V_{(n)}\oplus V_{(n-1,1)}\oplus\ldots \oplus V_{(n-i,i)}, \\
  \mc{S}f_k(n) & = & V_{(n)}\oplus V_{(n-1,1)}\oplus\ldots \oplus V_{(n-k,k)}, \\
  \mc{S}f(n) & = & (n+1)V_{(n)}\oplus \ldots \oplus(n-2k+1) V_{(n-k,k)}\oplus \ldots\oplus r V_{(\lceil\frac{n}{2}\rceil,\lfloor\frac{n}{2}\rfloor)},
\end{array}$\\
\end{center}
where $r=\lceil\frac{n}{2}\rceil-\lfloor\frac{n}{2}\rfloor+1.$
\end{cor}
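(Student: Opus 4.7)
The plan is to transfer everything directly through the isomorphism $L\mc{P}(n)\cong\mc{S}f(n)$ of the preceding lemma, since this is essentially a translation into polynomial notation. The crucial observation I would make first is that the $\mc{S}_n$-equivariant bijection $A\mapsto\mathbf{\underline{x}}_A$ is grading-preserving: a subset of cardinality $k$ corresponds to a square free monomial of degree $k$, so the isomorphism restricts to $L\mc{P}_k(n)\stackrel{\sim}{\to}\mc{S}f_k(n)$ for every $k$. From this single observation all four displayed statements follow.

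Concretely, I would define $F_i\mc{S}f_k(n)$ as the image of $F_iL\mc{P}_k(n)$ under this isomorphism, so that the polynomial analogue of $\sigma_k^n(A)$ is
$$\sigma_k^n(A)\ =\ \mathbf{\underline{x}}_A\!\!\!\sum_{B\in\mc{P}_{k-i}(\underline{n}\setminus A)}\!\!\!\mathbf{\underline{x}}_B\ =\ \mathbf{\underline{x}}_A\cdot e_{k-i}(x_j:j\notin A),$$
where $e_{k-i}$ denotes the elementary symmetric polynomial in the indicated variables. With this convention, $F_i\mc{S}f_k(n)=\mathbb{Q}\langle\sigma_k^n(A)\mid\mathrm{card}(A)=i\rangle$, and it is an $\mc{S}_n$-submodule by the same argument as Lemma 2.2 and the lemma following it.

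For the first line $\mc{S}f_k(n)\cong\mc{S}f_{n-k}(n)$, I would transport Proposition 2.1: the complementary map $C$ corresponds under the isomorphism to the $\mc{S}_n$-equivariant linear map $\mathbf{\underline{x}}_A\mapsto\mathbf{\underline{x}}_{\underline{n}\setminus A}$, which is clearly a bijection. The decompositions of $F_i\mc{S}f_k(n)$ and $\mc{S}f_k(n)$ are then immediate from Proposition 2.6 since isomorphic $\mc{S}_n$-modules have identical irreducible decompositions, and the decomposition of $\mc{S}f(n)=\bigoplus_{k=0}^{n}\mc{S}f_k(n)$ follows by summing over $k$ exactly as in the corollary after Proposition 2.6 (pairing $\mc{S}f_k(n)$ with $\mc{S}f_{n-k}(n)$ via the complementary map explains both the multiplicity $n-2k+1$ for $V_{(n-k,k)}$ and the exceptional multiplicity $r$ in the middle).

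No serious obstacle arises here; the only point requiring care is the verification that the bijection $A\mapsto\mathbf{\underline{x}}_A$ is grading-preserving and sends the canonical filtration elements to the polynomial expressions above, after which everything is a citation of results already proved.
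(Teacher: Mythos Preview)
Your proposal is correct and matches the paper's approach: the corollary is stated immediately after the lemma identifying $L\mc{P}(n)\cong\mc{S}f(n)$ as $\mc{S}_n$-sets on bases, and the paper offers no separate proof beyond noting that the grading and the filtration $F_*\mc{S}f_k(n)$ are simply the transported versions of $L\mc{P}_k(n)$ and $F_*L\mc{P}_k(n)$. Your write-up is in fact more explicit than the paper's, spelling out that $A\mapsto\mathbf{\underline{x}}_A$ preserves degree and that each displayed line is the image of the corresponding result (Proposition~2.1, Proposition~2.6, and its corollary) under this isomorphism.
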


\begin{cor}
a) The sequence $(\mc{S}f(n))_{n\geq0}$ given by the canonical inclusion \\ $\mc{S}f(n)\hookrightarrow \mc{S}f(n+1)$ is consistent, representation stable and monotonic.

b) The sequences $(F_i\mc{S}f_k(n))_{n\geq0}$ and $(\mc{S}f_k(n))_{n\geq0}$ are consistent, uniformly representation stable and monotonic.
\end{cor}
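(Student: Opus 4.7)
The plan is to transfer the results of the Proposition at the end of Section \ref{sec3} via the $\mc{S}_n$-equivariant isomorphism $\Phi_n:L\mc{P}(n)\to \mc{S}f(n)$, $A\mapsto\mathbf{\underline{x}}_A$, constructed in the preceding lemma.

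First I would verify that $\Phi_*$ is an isomorphism of consistent sequences: the canonical inclusion $\iota_n:\mc{S}f(n)\hookrightarrow\mc{S}f(n+1)$ (a polynomial in $x_1,\dots,x_n$ regarded as one in $x_1,\dots,x_{n+1}$) intertwines with the linearized subset inclusion $L\varphi_n$, because on generators both $\iota_n\circ\Phi_n$ and $\Phi_{n+1}\circ L\varphi_n$ send $A\subset\un{n}$ to $\mathbf{\underline{x}}_A\in\mc{S}f(n+1)$. Restricting to each graded piece yields $L\mc{P}_k(n)\cong \mc{S}f_k(n)$, and since $F_i\mc{S}f_k(n)$ is by definition the image of $F_iL\mc{P}_k(n)$ under $\Phi_n$, we also obtain $F_iL\mc{P}_k(n)\cong F_i\mc{S}f_k(n)$ compatibly with $\iota_n$.

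With this identification in place, part (a) and the $(\mc{S}f_k(n))_n$ half of part (b) follow verbatim from the corresponding items in the Proposition at the end of Section \ref{sec3}, since the properties ``consistent, (uniformly) representation stable, monotonic'' are preserved under isomorphisms of consistent sequences. For the filtration subsequence $(F_i\mc{S}f_k(n))_n$, uniform representation stability is immediate from the stable decomposition $F_i\mc{S}f_k(n)=V(0)_n\oplus\dots\oplus V(i)_n$ of Corollary 5.2, since all multiplicities are eventually $0$ or $1$; and monotonicity is the polynomial translation of Lemma \ref{lemma 3.2}(a), which guarantees that for any isotypical submodule $V(j)_n\subset F_i\mc{S}f_k(n)$ with $j\leq i$, the $\mc{S}_{n+1}$-span of its image under $\iota_n$ contains $V(j)_{n+1}$.

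No substantive obstacle is expected; the argument is a routine translation of Section \ref{sec3} into the square-free polynomial language. The only point worth flagging is the one of Remark 3.3: the canonical inclusion $\iota_n$ sends $F_i\mc{S}f_k(n)$ into $F_{i+1}\mc{S}f_k(n+1)$ rather than into $F_i\mc{S}f_k(n+1)$, but this jump by one level is exactly what Lemma \ref{lemma 3.2} exploits to supply monotonicity.
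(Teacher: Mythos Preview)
Your proposal is correct and matches the paper's (implicit) approach: the paper offers no proof for this corollary, treating it as an immediate transfer of the Proposition at the end of Section~\ref{sec3} through the $\mc{S}_n$-isomorphism $L\mc{P}(n)\cong\mc{S}f(n)$ of the preceding lemma. You have spelled this out carefully, and you go slightly beyond the paper by handling the filtration pieces $(F_i\mc{S}f_k(n))_n$ separately via Corollary~5.2 and Lemma~\ref{lemma 3.2}, since the Proposition in Section~\ref{sec3} does not explicitly treat the $F_i$ sequence; your flag about Remark~3.3 and the jump $F_i\to F_{i+1}$ is apt and is exactly the point the paper leaves tacit.
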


\noindent We will use the following notations for some polynomials in $\mc{S}f(n)$ (the first one is the symmetric fundamental polynomial in $n$ variables, the third corresponds to the previous $\sigma_k^n(A)$).\\
\begin{center}
$\begin{array}{rll}
  \sigma_k^n     & = & \mathop{\sum}\limits_{1\leq i_1\ldots\leq i_k\leq n}x_{i_1}x_{i_2}\ldots x_{i_k}     \\
  \sigma_k^B     & = & \mathop{\sum}\limits_{C\in \mc{P}_k(B)}\mathbf{\un{x}}_{C}   =  \mathop{\sum}\limits_{b_i\in B}x_{b_1}x_{b_2}\ldots x_{b_k}  \\
   \sigma_k^n (A)&  = & \mathbf{\un{x}}_A\sigma_{k-|A|}^{A'}   =  x_{a_1}\ldots x_{a_i}(\mathop{\sum}\limits_{b_j\not\in A}x_{b_{i+1}}\ldots x_{b_k})
   \end{array}$
\end{center}
(in the last formulae, $\mbox{card}(B)\geq k\geq \mbox{card}(A)$, $A=\{a_1,\ldots,a_i\}$, and $A'=\un{n}\setminus A$) and also
$$\begin{array}{c}
   \d_{hj}=x_h-x_j$ $(h<j) \\
   \d_{H_* J_*}=\d_{h_1 j_1}\d_{h_2 j_2}\ldots\d_{h_s j_s}
 \end{array}$$

 \noindent where $H_*=(h_1,h_2,\ldots,h_s)$, $J_*=(j_1,j_2,\ldots,j_s)$, $h_{\a}<j_{\a}$ and $H_*\cup J_*$ contains $2s$ elements. Using these notations, $\{\sigma_k^n(A)\mid \mathrm{card}(A)=i\}$ is a basis of $F_k\mc{S}f_k(n)$. Now we will describe bases for the irreducible $\mc{S}_n$-submodules $V_{(n-j,j)}$ contained in $\mc{S}f_k(n)$. For this, the next remark is fundamental:

\begin{rem}
The space $\mc{S}f(n)$ (like $L\mc{P}(n)$) has a canonical inner product: $$\lan \mathbf{\un{x}}_A,\mathbf{\un{x}}_B\ran=\delta_{A,B}$$ and the natural representation of $\mc{S}_n$ is an orthogonal representation. The homogenous components $\mc{S}f_k(n)$ are orthogonal and the irreducible pieces $V_{(n-j,j)}$ are pairwise orthogonal (even those in the same isotypical component: they lie in different homogenous components).
\end{rem}

\noindent Our method is to find vectors in $F_{i-1}\mc{S}f_k(n)^\bot$, the orthogonal complement of $F_{i-1}\mc{S}f_k(n)$ in $F_i\mc{S}f_k(n)$, because this complement corresponds to the irreducible component $V_{(n-i,i)}$ of $\mc{S}f_k(n)$, next to describe an independent subset of these vectors, and finally the computation of cardinality and dimension will give the basis.

\begin{lem}
The following vectors from $\mc{S}f_k(n)$ are orthogonal to $F_{k-1}\mc{S}f_k(n)$: $$\{\d_{H_*J_*}\mid H_*=(h_1,h_2,\ldots,h_k), J_*=(j_1,\ldots,j_k),h_{\a}\leq j_{\a},\mathrm{card}(H\cup J)=2k\}.$$
\end{lem}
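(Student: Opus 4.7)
The strategy is to use the explicit basis of $F_{k-1}\mathcal{S}f_k(n)$ provided by Lemma \ref{lemma 2.2} (transferred via the isomorphism $L\mathcal{P}(n)\cong \mathcal{S}f(n)$): it is spanned by the polynomials $\sigma_k^n(A)=\mathbf{\underline{x}}_A\sum_{b\notin A}x_b$ for $A$ with $\mathrm{card}(A)=k-1$. So it suffices to show that for every such $A$ and every admissible pair $(H_*,J_*)$,
\begin{equation*}
\lan \sigma_k^n(A),\,\d_{H_*J_*}\ran=0.
\end{equation*}
Since the monomials $\mathbf{\underline{x}}_T$ form an orthonormal basis, this inner product equals $\sum_{b\notin A}[\mathbf{\underline{x}}_{A\sqcup\{b\}}]\,\d_{H_*J_*}$, where $[\mathbf{\underline{x}}_T]f$ denotes the coefficient of $\mathbf{\underline{x}}_T$ in $f$.

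The key combinatorial observation is that expanding $\d_{H_*J_*}=\prod_{\alpha=1}^{k}(x_{h_\alpha}-x_{j_\alpha})$ yields
\begin{equation*}
\d_{H_*J_*}=\mathop{\sum}\limits_{\varepsilon\in\{h,j\}^k}(-1)^{|\{\alpha:\varepsilon_\alpha=j\}|}\,\mathbf{\underline{x}}_{T(\varepsilon)},
\end{equation*}
where $T(\varepsilon)=\{\varepsilon_\alpha\text{-entry of pair }\alpha:\alpha=1,\ldots,k\}$ is a \emph{transversal} of the $k$ disjoint pairs $P_\alpha=\{h_\alpha,j_\alpha\}$ (disjointness is ensured by $\mathrm{card}(H\cup J)=2k$). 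Thus $[\mathbf{\underline{x}}_T]\d_{H_*J_*}\neq 0$ only when $T$ is a transversal of $\{P_1,\ldots,P_k\}$.

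Now I would analyze two cases. If $A$ is not a partial transversal of $\{P_1,\ldots,P_k\}$ (either some pair is entirely contained in $A$, or $A\not\subset H\cup J$), then $A\sqcup\{b\}$ cannot be a transversal for any $b\notin A$, so every term in the sum vanishes. If instead $A$ is a partial transversal, then because $|A|=k-1$ and $A$ meets each pair at most once, $A$ meets exactly $k-1$ of the pairs, missing a unique pair $P_{\alpha_0}=\{h_{\alpha_0},j_{\alpha_0}\}$. The only values of $b$ producing a transversal are $b=h_{\alpha_0}$ and $b=j_{\alpha_0}$; both lie outside $A$ and both contribute, but the exponents of $(-1)$ in their coefficients differ by exactly one (the number of $j$-entries in $A\sqcup\{b\}$ changes by $1$ when we swap $h_{\alpha_0}\leftrightarrow j_{\alpha_0}$). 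Hence the two contributions cancel and the inner product vanishes.

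No step is technically hard; the only subtlety is keeping the bookkeeping of signs in the cancellation argument clean, which is why I would state it in the form ``sum over transversals of $k$ disjoint pairs'' and then exploit the unique missing pair combinatorially rather than via direct monomial-by-monomial comparison.
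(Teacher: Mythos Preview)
Your proof is correct and follows essentially the same approach as the paper: both compute $\langle\sigma_k^n(A),\Delta_{H_*J_*}\rangle$ by observing that the monomials of $\Delta_{H_*J_*}$ are exactly the transversals of the disjoint pairs $\{h_\alpha,j_\alpha\}$, then split into the same cases (either $A$ is not a partial transversal and nothing matches, or $A$ misses a unique pair $\{h_{\alpha_0},j_{\alpha_0}\}$ and the two resulting terms cancel). Your transversal language makes the bookkeeping slightly tidier, but the argument is the same.
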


\begin{proof}
Obviously $\d_{H_*J_*}\in \mc{S}f_k(n)$. The canonical basis of $F_{k-1}\mc{S}f_k(n)$ is given by $\{\sigma_k^n(A)\mid \mathrm{card}(A)=k-1\}$. Computing $\langle\mathbf{\un{x}}_A\cdot\sigma_1^{A'},\d_{h_1j_1}\d_{h_2j_2}\ldots\d_{h_kj_k}\rangle$ we obtain:\\
  \indent a) if $A\nsubseteq H\cup J$, there is no match between the monomials of these two polynomials;\\
 \indent b) if there is an index $s\epsilon\{1,\ldots,k\}$ such that $\{h_s,j_s\}\subset A$, there is no match between the monomials of the two polynomials;\\
 \indent c) if $A$ contains $\a$ elements $H_{\a}\subset H_*$ and $\b$ elements $J_{\b}\subset J_*$ ($\a +\b =k-1$ and the indices of these elements are disjoint), there are precisely two common monomials of the given polynomials: $\mathbf{\un{x}_A}x_{h_s}$ and $\mathbf{\un{x}_A}x_{j_s}$ where the index $s$ is the unique index from 1 to $k$ which does not appear as an index in $H_{\a}\cup J_{\b}$; the two monomials have coefficients (1,1) in the first polynomial and $(\pm1,\mp1)$ in the second one.

 Therefore in all three cases the dot product is zero.
\end{proof}

\begin{lem}
The following vectors from $F_i\mc{S}f_k(n)$ are orthogonal to $F_{i-1}\mc{S}f_k(n)$: $$\{\d_{H_*J_*}\sigma^L_{k-i}\mid H_*=(h_1,\ldots,h_i), J_*=(j_1,\ldots,j_i), H\sqcup J\sqcup L=\un{n}\}$$
\end{lem}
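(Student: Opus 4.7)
The plan is to expand both sides of the inner product in the canonical monomial basis and then identify a sign-cancellation.

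First I would expand $\delta_{H_*J_*}\sigma^L_{k-i}$ monomial by monomial. Since $H$, $J$, $L$ are pairwise disjoint, no $x_p^2$ relations kick in, and every monomial $\mathbf{\underline{x}}_N$ appearing in the product has the form $N = N_H \sqcup N_J \sqcup N_L$ with $N_L \in \mathcal{P}_{k-i}(L)$ and with $N_H \subseteq H$, $N_J \subseteq J$ obtained by choosing exactly one of $h_\alpha, j_\alpha$ from each pair. The coefficient of $\mathbf{\underline{x}}_N$ in $\delta_{H_*J_*}\sigma^L_{k-i}$ is then $(-1)^{|N \cap J|}$.

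Next I would use that $F_{i-1}\mathcal{S}f_k(n)$ has basis $\{\sigma_k^n(A) \mid \mathrm{card}(A)=i-1\}$ with $\sigma_k^n(A) = \mathbf{\underline{x}}_A\,\sigma^{A'}_{k-i+1}$, whose monomial expansion is $\sum_{N \supseteq A,\,|N|=k}\mathbf{\underline{x}}_N$. Matching monomials gives
\[
\langle \sigma_k^n(A),\,\delta_{H_*J_*}\sigma^L_{k-i}\rangle \;=\; \sum_{N \supseteq A,\,N \text{ of the above form}} (-1)^{|N \cap J|}.
\]

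Now I would classify the pairs $\{h_\alpha,j_\alpha\}$ according to how they meet $A$. If both $h_\alpha$ and $j_\alpha$ belong to $A$, then no admissible $N$ can contain both, so the sum is empty and vanishes. If exactly one of them lies in $A$, the choice at that pair is forced. If neither lies in $A$, the two choices contribute $x_{h_\alpha}$ and $-x_{j_\alpha}$, i.e.\ a factor $(+1) + (-1) = 0$ in the sum. Hence the entire sum factorises as a product with a zero factor, unless \emph{every} pair meets $A$ in exactly one element. But the pairs are disjoint and there are $i$ of them, so this would require $|A \cap (H\cup J)| = i$; since $|A|=i-1$, this is impossible. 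Thus at least one pair is ``free'' and the inner product vanishes.

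The main obstacle is simply the careful combinatorial bookkeeping in step one (degrees, disjointness, and the sign convention for $\delta_{H_*J_*}$); once the expansion is in hand, the pair-by-pair cancellation argument is immediate and parallels Lemma 5.4, which is the $i=k$, $L=\emptyset$ special case.
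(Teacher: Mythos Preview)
Your argument for orthogonality is correct and considerably more direct than the paper's. You expand $\d_{H_*J_*}\sigma^L_{k-i}$ and $\sigma_k^n(A)$ in the monomial basis, match terms, and observe that the resulting signed sum factors over the pairs $\{h_\alpha,j_\alpha\}$ (times a count of admissible $L$-parts containing $A\cap L$); any pair disjoint from $A$ contributes the factor $1+(-1)=0$, and the pigeonhole $|A|=i-1<i$ guarantees at least one such pair. This is a clean self-contained generalisation of the $i=k$ case.

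The paper takes a genuinely different route. It uses the isometry $\Psi:\mc{S}f_i(n)\to F_i\mc{S}f_k(n)$, $\mathbf{\un{x}}_A\mapsto\mathbf{\un{x}}_A\sigma_{k-i}^{A'}$, and the bulk of its argument is the combinatorial identity $\Psi(\d_{H_*J_*})=\d_{H_*J_*}\sigma_{k-i}^L$, proved via a sign-reversing involution on the unwanted monomials. Orthogonality then follows from the previous lemma together with $\Psi(F_{i-1}\mc{S}f_i(n)^{\perp})=F_{i-1}\mc{S}f_k(n)^{\perp}$. This is more indirect but buys more: the identity $\Psi(\d_{H_*J_*})=\d_{H_*J_*}\sigma_{k-i}^L$ simultaneously shows that these vectors actually lie in $F_i\mc{S}f_k(n)$---which the lemma asserts but which is not obvious, since the individual summands $\mathbf{\un{x}}_{H_\alpha\sqcup J_\beta}\sigma_{k-i}^L$ are not themselves in $F_i$---and it is exactly what is invoked again in the proof of Proposition~\ref{prop 5.8}. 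Your direct computation establishes orthogonality but does not address this membership claim; if you need it (and you will for the basis statement), you still require the $\Psi$-identity or a separate argument.
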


\begin{proof}
Translating Lemma 2.4 into polynomial notation we obtain the isometry of pairs of Euclidean spaces (it transforms an orthogonal basis into an orthogonal basis):
$$\Psi:(\mc{S}f_i(n),F_{i-1}\mc{S}f_i(n))\stackrel{\cong}{\to}(F_i\mc{S}f_k(n),F_{i-1}\mc{S}f_k(n)) ,$$ $$\mathbf{\underline{x}}_A\mapsto \mathbf{\underline{x}}_A \sigma_{k-i}^{A'}.$$ A direct computation shows that $$\Psi(\d_{H_*J_*})=\Psi(\mathop{\sum}\limits_{\alpha \sqcup \beta=\un{i}}(-1)^{|\beta|}\mathbf{\underline{x}}_{H_\alpha}\mathbf{\underline{x}}_{J_\beta})=\mathop{\sum}\limits_{\alpha \sqcup \beta=\un{i}}(-1)^{|\beta|}\mathbf{\underline{x}}_{H_\alpha}\mathbf{\underline{x}}_{J_\beta}\sigma_{k-i}^{(H_\alpha\sqcup J_\beta)'} .$$ Using the decomposition formula $\sigma_p^{X\sqcup Y}=\sigma_p^X+\sigma_p^Y+ \mathop{\sum}\limits_{q+q'=p} \sigma_q^X\sigma_{q'}^Y$, the symmetric sum $\sigma_{k-i}^{(H_\alpha\sqcup J_\beta)'}$   splits into $$\sigma_{k-i}^{(H_*\sqcup J_*)'}+\sigma_{k-i}^{H_{\a'}\sqcup J_{\b'}}+ \mathop{\sum}\limits_{q+q'=k-i} \sigma_q^{H_{\a'}\sqcup J_{\b'}}\cdot \sigma_{q'}^{(H_*\sqcup J_{\ast})'},$$ where $H_{\alpha'}=H_*\setminus H_{\alpha}\,\,and\,\, J_{\b'}=J_{\ast}\setminus J_{\b}.$ The first sum in this splitting gives the desired result: $$\mathop{\sum}\limits_{\alpha \sqcup \beta=\un{i}}(-1)^{|\beta|}\mathbf{\underline{x}}_{H_\alpha}\mathbf{\underline{x}}_{J_\beta}\sigma_{k-i}^{(H_*\sqcup J_*)'}=\d_{H_*J_*}\sigma_{k-i}^L.$$  We will show that the second and the third term have no contribution to $\Psi(\d_{H_*J_*})$ and this will finish the proof: through the isometry $\Psi$ the subspace $F_{i-1}\mc{S}f_i(n)$ is sent to $F_{i-1}\mc{S}f_k(n)$, hence $\Psi(F_{i-1}\mc{S}f_i(n)^{\perp})=F_{i-1}\mc{S}f_k(n)^{\perp}.$ To show that the second sum $$\mathop{\sum}\limits_{\alpha \sqcup \beta=\un{i}}(-1)^{|\beta|}\mathbf{\underline{x}}_{H_\alpha}\mathbf{\underline{x}}_{J_\beta}\sigma_{k-i}^{H_{\a'}\sqcup J_{\b'}}$$ and the third sum\\ $$\mathop{\sum}\limits_{\alpha \sqcup
\beta=\un{i}}\big[(-1)^{|\beta|}\mathbf{\underline{x}}_{H_\alpha}\mathbf{\underline{x}}_{J_\beta}\mathop{\sum}\limits_{q+q'=k-i} \sigma_{q'}^{H_{\a'}\sqcup J_{\b'}}\cdot \sigma_{q'}^{(H_*\sqcup J_{\ast})'}\big]=$$ $$=\mathop{\sum}\limits_{q+q'=k-i}\big[\mathop{\sum}\limits_{\alpha \sqcup \beta=\un{i}}(-1)^{|\beta|}\mathbf{\underline{x}}_{H_\alpha}\mathbf{\underline{x}}_{J_\beta} \sigma_{q}^{H_{\a'}\sqcup J_{\b'}}\cdot \sigma_{q'}^{(H_*\sqcup J_{\ast})'}\big]$$ are zero, it is enough to prove that for any $q$ in the interval $[1,k-i]$ the next sum is zero  $$S=\mathop{\sum}\limits_{\alpha \sqcup \beta=\un{i}}(-1)^{|\beta|}\mathbf{\underline{x}}_{H_\alpha}\mathbf{\underline{x}}_{J_\beta} \sigma_{q}^{H_{\a'}\sqcup J_{\b'}}.$$ This sum contains monomials from two disjoint set of variables, $\{x_{h_1},\ldots,x_{h_i}\}$ and $\{x_{j_1},\dots,x_{j_i}\}$ ($H_*=(h_1,\ldots,h_i), J_*=(j_1,\ldots,j_i)$). Therefore in such a monomial $\mathbf{m}=\mathbf{\underline{x}}_{H_\alpha}\mathbf{\underline{x}}_{J_\beta} \mathbf{\underline{x}}_{M}$ ($M$ is a $q$-subset of $H_{\a'}\sqcup J_{\b'}$) there are indices $p$ such that $x_{h_p}$ and $x_{j_p}$ are both contained in $\mathbf{m}$; on the other hand, precisely one of them is in the "first part" and the other is in the "second part": either $h_p\in H_{\a},\,\,j_p\in M$ or $h_p\in M,\,\, j_p\in J_{\b}.$ We define an involution on the set of monomials $\mathbf{m}$ in $S$, choosing the maximal common index $p$ and changing the places of $x_{h_p}$ and $x_{j_p}$ ($h_p\in H_{\a}$):$$\mathbf{m}=\mathbf{\underline{x}}_{H_\alpha}\mathbf{\underline{x}}_{J_\beta}\mathbf{\underline{x}}_M \leftrightarrow \mathbf{m'}= \mathbf{\underline{x}}_{H_\alpha \setminus \{h_p\}}\mathbf{\underline{x}}_{J_\beta \sqcup \{j_p\}}\mathbf{\underline{x}}_{M\sqcup \{h_p\}\setminus \{j_p\}} .$$ In $S$, these two monomials have coefficients $(-1)^{|\b|}\mathbf{m}$ and $(-1)^{|\b|+1}\mathbf{m'}$, hence the total sum is zero.
\end{proof}

\begin{lem}\label{lemma 5.7}
The set $$\{\d_{H_*J_*}\in \mc{S}f_k(n)\mid H_*=(h_1,\ldots,h_k), J_*=(j_1,\ldots,j_k), h_{\a}< j_{\a}\,\,, \emph{card}(H_*\cup J_*)=2k\}$$ contains a linearly independent set of cardinality ${n\choose k} - {n\choose {k-1}}$.
\end{lem}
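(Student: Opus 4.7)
My plan is to exhibit an explicit linearly independent subfamily indexed by $k$-subsets of $\un{n}$ satisfying a ballot-type inequality, with linear independence detected by leading monomials. For each $k$-subset $J = \{j_1 < j_2 < \cdots < j_k\} \subset \un{n}$ satisfying $j_i \geq 2i$ for every $i$, I would define $H_* = (h_1, \ldots, h_k)$ by letting $h_i$ be the $i$-th smallest element of $\un{n} \setminus J$. The ballot inequality is precisely what forces $h_i < j_i$: the number of elements of $\un{n} \setminus J$ strictly less than $j_i$ equals $(j_i - 1) - (i-1) = j_i - i \geq i$, hence $h_i \leq j_i - 1 < j_i$. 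Since $H$ and $J$ are disjoint by construction, $\mathrm{card}(H_* \cup J_*) = 2k$, and $\d_{H_* J_*}$ lies in the set of the statement.

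For linear independence, I would equip $\mc{S}f_k(n)$ with the lexicographic monomial order induced by $x_1 < x_2 < \cdots < x_n$. In each factor $x_{h_\alpha} - x_{j_\alpha}$ of $\d_{H_* J_*} = \prod_{\alpha=1}^{k}(x_{h_\alpha} - x_{j_\alpha})$, the leading term is $-x_{j_\alpha}$, so the leading monomial of the product is $(-1)^k \mathbf{\un{x}}_J$, with nonzero coefficient. Distinct admissible $J$'s obviously yield distinct leading monomials, forcing the family $\{\d_{H_* J_*}\}_J$ to be linearly independent in $\mc{S}f_k(n)$.

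It remains to count the admissible $J$. By a classical reflection (ballot) argument, the $k$-subsets $\{j_1 < \cdots < j_k\} \subset \un{n}$ with $j_i \geq 2i$ for every $i$ are exactly the second rows of standard Young tableaux of shape $(n-k,k)$, so their number is $\binom{n}{k} - \binom{n}{k-1}$, matching the hook-length computation of $\dim V_{(n-k,k)}$ already invoked in the proof of Proposition~\ref{prop 2.6}. The only real piece of care is the verification of the ballot inequality $h_i < j_i$; the leading-monomial observation and the count are then essentially immediate, so there is no substantive obstacle in the argument.
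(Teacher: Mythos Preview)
Your argument is correct and takes a genuinely different route from the paper's. The paper proceeds by induction on $n$: it assumes linearly independent families $\d_k^{n-1}\subset \mc{S}f_k(n-1)$ and $\d_{k-1}^{n-1}\subset \mc{S}f_{k-1}(n-1)$ of the right sizes, keeps $\d_k^{n-1}$ as is, and multiplies each $\d_{L_*M_*}\in \d_{k-1}^{n-1}$ by a new linear factor $\d_{r,n}=(x_r-x_n)$ with $r$ the smallest available index; independence is checked by separating the terms that do or do not involve $x_n$, and Pascal's identity gives the count $\binom{n}{k}-\binom{n}{k-1}$.

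Your construction is instead a direct, non-recursive description: for each second row $J=\{j_1<\cdots<j_k\}$ of a standard Young tableau of shape $(n-k,k)$ (equivalently $j_i\ge 2i$) you pair it with the first $k$ elements of its complement and read off $\d_{H_*J_*}$ with leading term $(-1)^k\mathbf{\un{x}}_J$. This is essentially the polytabloid basis of $V_{(n-k,k)}$ realized inside $\mc{S}f_k(n)$, and the triangularity with respect to the monomial basis makes independence immediate. The advantage of your approach is that it produces the whole basis at once, indexed by a familiar combinatorial set, and the independence proof is a one-line leading-term argument; the paper's inductive scheme, on the other hand, is what drives the explicit recursive description used in Example~5.9 and ties in cleanly with the restriction maps appearing earlier (Lemma~\ref{lemma 2.2}). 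Both arguments are short; yours is somewhat more conceptual, the paper's more algorithmic.
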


\begin{proof}
By induction on $n$, we start with $n=0$, $n=1$, $n=2$:
\begin{center}\begin{tabular}{rll}

  $n=0$: & $\mc{S}f_0(0)=1$; &  \\
  $n=1$: & $F_0\mc{S}f_0(1)=\lan x_1\ran$; &  \\
  $n=2$: & $F_0\mc{S}f_1(2)=\lan x_1+x_2\ran$, & $F_0\mc{S}f_1(2)^{\perp}=\lan x_1-x_2\ran$. \\
  \end{tabular}\end{center}
\noindent Suppose we have a linearly independent set of polynomials $\d_k^{n-1}=\{ \d_{H_*J_*}\in \mc{S}f_k(n-1)\}$ of cardinality $\delta_k^{n-1}={{n-1}\choose {k}}-{{n-1}\choose {k-1}}$ and a second set of linearly independent polynomials $\d_{k-1}^{n-1}=\{\d_{H_*J_*}\in \mc{S}f_{k-1}(n-1)\}$ of cardinality $\delta_{k-1}^{n-1}={{n-1}\choose {k-1}} -{{n-1}\choose{k-2}}$. Then we can define a subset in $\mc{S}f_k(n)$ taking $\d_k^{n-1}$ and all polynomials $\d_{r,n}\cdot \d_{L_*M_*}$ with $\d_{L_*M_*}$ in $\d_{k-1}^{n-1}$, where the index $r$ is the smallest element in the complement of $L_*\sqcup M_* \sqcup \{n\}$. These polynomials are linearly independent:
$\sum c_{r,L_*M_*} \d_{r,n}\cdot \d_{L_*M_*}+ \sum c_{H_*L_*} \d_{H_*L_*}=0$ implies $c_{r,L_*,M_*}=0$ (look at the coefficient of $x_n$) and next $c_{H_*J_*}=0$. Their total number is \begin{center} $\delta_k^{n-1} +\delta_{k-1}^{n-1}={{n-1}\choose k} + {{n-1}\choose k-1}-{n-1\choose k-1}-{n-1\choose k-2}={n\choose k}-{n\choose k-1}.$\end{center}
\end{proof}

\begin{prop}{\label{prop 5.8}} There is a basis of the irreducible component $V_{(n-i,i)}$ of $\mc{S}f_k(n)$ given by polynomials of the form
$$\{\d_{H_*J_*} \sigma_{k-i}^L\mid H_*=(h_1,\ldots,h_i), J_*=(j_1,\ldots,j_i), H\sqcup J\sqcup L=\un{n}\}.$$
\end{prop}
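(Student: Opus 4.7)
The plan combines the two preceding orthogonality lemmas with a dimension count. By the lemma just before Lemma~\ref{lemma 5.7}, every polynomial $\d_{H_*J_*}\sigma_{k-i}^L$ lies in the orthogonal complement of $F_{i-1}\mc{S}f_k(n)$ inside $F_i\mc{S}f_k(n)$. By the corollary at the start of Section~\ref{sec5}, this complement is precisely the irreducible component $V_{(n-i,i)}$, whose dimension (via the hook length formula invoked in Proposition~\ref{prop 2.6}) equals $\binom{n}{i}-\binom{n}{i-1}$. So it suffices to extract from the proposed spanning family a linearly independent subset of this size.

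The natural tool is the isometry $\Psi\colon\mc{S}f_i(n)\to F_i\mc{S}f_k(n)$, $\mathbf{\un{x}}_A\mapsto \mathbf{\un{x}}_A\sigma_{k-i}^{A'}$, introduced in the proof of the previous lemma as the polynomial translation of Lemma~\ref{lemma 2.2}. In that proof it was established that $\Psi(\d_{H_*J_*})=\d_{H_*J_*}\sigma_{k-i}^L$ (the extra ``second and third'' terms being shown to cancel via the involution on monomials swapping $x_{h_p}$ and $x_{j_p}$ at the maximal common index). Since $\Psi$ is a vector space isomorphism carrying $F_{i-1}\mc{S}f_i(n)$ onto $F_{i-1}\mc{S}f_k(n)$, it restricts to an isomorphism between the orthogonal complements, identifying the copy of $V_{(n-i,i)}$ sitting inside $\mc{S}f_i(n)=F_i\mc{S}f_i(n)$ with the copy inside $F_i\mc{S}f_k(n)$.

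Now apply Lemma~\ref{lemma 5.7} with the degree set equal to $i$ (i.e.\ with $k$ there replaced by $i$): the family $\{\d_{H_*J_*}\in\mc{S}f_i(n)\}$ contains a linearly independent subset of cardinality $\binom{n}{i}-\binom{n}{i-1}$. By the first orthogonality lemma (again applied with $k=i$), each such $\d_{H_*J_*}$ belongs to $V_{(n-i,i)}\subset\mc{S}f_i(n)$, and the matching dimensions force this subset to be a basis. Transporting it through $\Psi$ yields the desired basis $\{\d_{H_*J_*}\sigma_{k-i}^L\}$ of $V_{(n-i,i)}\subset\mc{S}f_k(n)$. The main obstacle has already been overcome in Lemma~\ref{lemma 5.7}, whose proof produces the required independent selection by an inductive construction using multiplication by $\d_{r,n}$ (with $r$ the smallest free index); once that independence is in hand, no further computation is needed, because $\Psi$ is a linear isomorphism and therefore automatically transports linear independence and the identification of isotypical components.
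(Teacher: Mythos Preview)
Your proof is correct and follows essentially the same line as the paper's own argument: identify $V_{(n-i,i)}$ as the orthogonal complement of $F_{i-1}$ in $F_i$, invoke Lemma~\ref{lemma 5.7} (with degree $i$) to produce $\binom{n}{i}-\binom{n}{i-1}$ independent $\d_{H_*J_*}$ in $\mc{S}f_i(n)$, and push them through the isometry $\Psi$. One small slip: the map $\Psi$ is the polynomial translation of Lemma~2.4 (the equivalence $F_iL\mc{P}_k(n)\cong L\mc{P}_i(n)$), not of Lemma~\ref{lemma 2.2}.
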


\begin{proof} The irreducible component $V_{(n-i,i)}$ of $\mc{S}f_k(n)$ is the orthogonal complement of $F_{i-1}\mc{S}f_k(n)$ in $F_i\mc{S}f_k(n)$; its dimension is ${n\choose i}-{n\choose i-1}$. From Lemma \ref{lemma 5.7} there is a set of linearly independent polynomials $\{\d_{H_*J_*}\in \mc{S}f_{i}(n)\}$ of cardinality ${n\choose i}-{n\choose i-1}$. The image of this set through the isometry $\Psi: \mc{S}f_i(n)\rightarrow F_i\mc{S}f_k(n)$ gives the required basis.
\end{proof}

\begin{example}
Using the algorithm described in Lemma \ref{lemma 5.7} and Proposition \ref{prop 5.8}, we find the basis of the component $V(5,2)$ of $\mc{S}f_3(7)$:\\
$$\begin{array}{ll}
  (x_4-x_1)(x_2-x_3)(x_5+x_6+x_7), & (x_6-x_2)(x_1-x_4)(x_3+x_5+x_7), \\
  (x_4-x_2)(x_1-x_3)(x_5+x_6+x_7) ,& (x_6-x_2)(x_1-x_5)(x_3+x_4+x_7), \\
  (x_5-x_3)(x_1-x_2)(x_4+x_6+x_7) ,& (x_7-x_3)(x_1-x_2)(x_4+x_5+x_6), \\
  (x_5-x_2)(x_1-x_3)(x_4+x_6+x_7), & (x_7-x_2)(x_1-x_3)(x_4+x_5+x_6) ,\\
   (x_5-x_2)(x_1-x_4)(x_3+x_6+x_7),& (x_7-x_2)(x_1-x_4)(x_3+x_5+x_6), \\
  (x_6-x_2)(x_1-x_2)(x_4+x_6+x_7),& (x_7-x_2)(x_1-x_5)(x_3+x_4+x_6), \\
  (x_6-x_2)(x_1-x_3)(x_3+x_5+x_7) ,& (x_7-x_2)(x_1-x_6)(x_3+x_4+x_5).
\end{array}$$
\end{example}

\section{An application to the Arnold Algebra}
V.I. Arnold \cite{A} computed the cohomology algebra of the pure braid group $P_n$, describing the first non trivial cohomology algebra of a complex hyperplane arrangement, later generalized by Orlik-Solomon \cite{OS} to arbitrary hyperplane arrangements. We denote this algebra by $\mc{A}_n$.
\begin{defn}
(Arnold) The Arnold algebra $\mc{A}_n$ is the graded commutative algebra (over $\mathbb{Q}$) generated in degree one by $n\choose2$ generators $\{w_{ij}\}$ having the defining relations of degree two (the Yang Baxter or the infinitesimal braid relations) $YB_{ijk}$: $$\mc{A}_n=\lan w_{ij},\,\,1\leq i<j\leq n \mid YB_{ijk}: w_{ij}w_{ik}-w_{ij}w_{jk}+w_{ik}w_{jk},\,\,1\leq i<j<k\leq n\ran.$$
\end{defn}
\noindent With the convention $w_{ij}=w_{ji}$ $(i\neq j)$, we define the natural action of the symmetric group $\mc{S}_n$ on the exterior algebra $\Lambda^*(w_{ij})$ by $\pi\cdot w_{ij}=w_{\pi(i)\pi(j)}$. The set of infinitesimal braid relations $\{YB_{ijk}\}$ is invariant (up to a sign) so we have a natural action of $\mc{S}_n$ onto the Arnold algebra $\mc{A}_n$. Church and Farb \cite{CF} proved the representation stability of $\mc{A}_n$ (see also \cite{H}). We will use some of our previous results to describe the irreducible $\mc{S}_n$ submodules of $\mc{A}^1(n)$ and $\mc{A}^2(n)$.

First, some notations:
\begin{center}$\begin{array}{lll}
  \Omega^n & = & w_{12}+w_{13}+\ldots+w_{n-1,n}, \\
  \Omega^n_{ij} & = & \mathop\sum\limits_{k\neq i,j}(w_{ik}-w_{jk}), \\
  \Omega_{ijkl} & = & w_{il}-w_{ik}+w_{jk}-w_{jl}.
\end{array}$ \end{center}

\begin{prop}The degree one component of the Arnold algebra decomposes as $$\mc{A}^1(n)= V(0)_n\oplus V(1)_n\oplus V(2)_n$$ for $n\geq4$. In the unstable cases the decompositions are $$\mc{A}^1(2)=V_{(2)},\,\,\mc{A}^1(3)=V_{(3)}\oplus V_{(2,1)}.$$
The following list gives bases of these three components: \\
\begin{center}$\begin{array}{lll}
  \mc{B}(n) & = & \{\Omega^n\} \\
  \mc{B}(n-1,1) & = & \{\Omega^n_{12},\Omega^n_{13},\ldots,\Omega^n_{1n}\} \\
  \mc{B}(n-2,2) & = & \{\Omega_{1234},\Omega_{1324}, \\
   &  & \Omega_{1235},\Omega_{1325},\Omega_{1425}, \\
   &  & \Omega_{1236},\Omega_{1326},\Omega_{1426},\Omega_{1526}, \\
   &  & \ldots \ldots\ldots\ldots \\
   &  & \Omega_{123n},\Omega_{132n},\Omega_{142n},\ldots,\Omega_{1,n-1,2,n}\}.
\end{array}$\\
\end{center}
\end{prop}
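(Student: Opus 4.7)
The strategy is to identify $\mc{A}^1(n)$ with $\mc{S}f_2(n)$ and invoke Propositions 2.6 and 5.8.

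Because the Yang--Baxter relations $YB_{ijk}$ are homogeneous of degree two, the degree-one component $\mc{A}^1(n)$ is the free $\mathbb{Q}$-vector space on $\{w_{ij}\mid 1\le i<j\le n\}$, and the action $\pi\cdot w_{ij}=w_{\pi(i)\pi(j)}$ coincides with the permutation action on two-element subsets of $\un{n}$. Hence the assignment $w_{ij}\longleftrightarrow\{i,j\}\longleftrightarrow x_ix_j$ gives $\mc{S}_n$-equivariant isomorphisms $\mc{A}^1(n)\cong L\mc{P}_2(n)\cong\mc{S}f_2(n)$. For $n\ge 4$ we have $k=2\le n/2$, so Proposition 2.6 yields $L\mc{P}_2(n)=V_{(n)}\oplus V_{(n-1,1)}\oplus V_{(n-2,2)}$, which in stable notation reads $V(0)_n\oplus V(1)_n\oplus V(2)_n$. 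The unstable cases are immediate: $\mc{A}^1(2)=\lan w_{12}\ran\cong V_{(2)}$ and $\mc{A}^1(3)\cong L\mc{P}_2(3)\cong L\mc{P}_1(3)=V_{(3)}\oplus V_{(2,1)}$ using Proposition 2.1.

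The bases arise by transporting Proposition 5.8 through $x_ix_j\mapsto w_{ij}$. For the trivial summand $V_{(n)}$, one has $\sigma_2^n\mapsto\sum_{i<j}w_{ij}=\Omega^n$, giving $\mc{B}(n)$. For $V_{(n-1,1)}$, Proposition 5.8 with $k=2$, $i=1$ produces basis candidates $(x_h-x_j)\sigma_1^{\un{n}\setminus\{h,j\}}$, each mapping to $\sum_{\ell\neq h,j}(w_{h\ell}-w_{j\ell})=\Omega^n_{hj}$; the inductive algorithm of Lemma 5.7, started from $\{x_1-x_2\}\subset\mc{S}f_1(2)$, selects precisely the $n-1$ representatives with $h=1$, giving $\mc{B}(n-1,1)$. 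For $V_{(n-2,2)}$, the case $k=i=2$ produces products $(x_{h_1}-x_{j_1})(x_{h_2}-x_{j_2})$ (with $h_\alpha<j_\alpha$ and, say, $h_1<h_2$), each identifying under the isomorphism with $-\Omega_{h_1,j_1,h_2,j_2}$ by direct expansion.

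The main obstacle is a bookkeeping step for $\mc{B}(n-2,2)$: one must verify that Lemma 5.7's recursion, which at stage $n'$ appends $(x_r-x_{n'})(x_1-x_j)$ for each $j\in\{2,\dots,n'-1\}$ with $r$ the smallest element of $\un{n'-1}\setminus\{1,j\}$, produces exactly the block $\{\Omega_{1,j,k,n'}\}$ listed in the statement (with $k=3$ when $j=2$, and $k=2$ otherwise). This is a direct translation via the sign identity above, but it is the one step not immediately quoted from an earlier result.
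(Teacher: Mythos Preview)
Your proposal is correct and follows essentially the same approach as the paper: identify $\mc{A}^1(n)\cong L\mc{P}_2(n)\cong\mc{S}f_2(n)$ via $w_{ij}\leftrightarrow\{i,j\}\leftrightarrow x_ix_j$, apply Proposition~2.6 for the decomposition, and transport the bases of Proposition~5.8 / Lemma~5.7 through this isomorphism. The paper's proof is in fact briefer than yours, merely exhibiting the correspondences $\d_{12}\sigma_1^{(12)'}\leftrightarrow\Omega^n_{12}$ and $\d_{ij}\d_{lk}\leftrightarrow\Omega_{ijkl}$ and leaving the recursion bookkeeping for $\mc{B}(n-2,2)$ implicit; your explicit verification that at stage $n'$ the algorithm produces $(x_3-x_{n'})(x_1-x_2)$ and $(x_2-x_{n'})(x_1-x_j)$ for $j=3,\dots,n'-1$, matching (up to sign) the listed block $\Omega_{123n'},\Omega_{132n'},\dots,\Omega_{1,n'-1,2,n'}$, is a welcome addition rather than a deviation.
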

\begin{proof}The first part is a consequence of the isomorphism of $\mc{S}_n$-modules $\mc{A}^1(n)\cong L\mc{P}_2(n)$, $w_{ij}\mapsto \{i,j\}$, and of Proposition \ref{prop 2.6}. The second part is a consequence of the inductive method to construct bases of the different pieces of $\mc{S}f_2(n)\cong L\mc{P}_2(n)\cong \mc{A}^1(n)$: for instance, the polynomial $$\d_{12}\sigma^{(12)'}_n= (x_1-x_2)(x_3+x_4+\ldots+x_n)$$ correspond to the linear combination of sets $$(\{1,3\}+\{1,4\}+\ldots+\{1,n\})-(\{2,3\}+\{2,4\}+\ldots+\{2,n\})$$ and this corresponds to $\Omega^n_{12}$. Similarly, the polynomial $\d_{ij}\d_{lk}=(x_i-x_j)(x_l-x_k)$ corresponds to $\Omega_{ijkl}$.
\end{proof}
\noindent
The vector space $L\mc{P}_3(n)$ is isomorphic to $I^2(n)$, the degree two component of the ideal of infinitesimal braid relations:
$$\{i,j,k\}\leftrightarrow YB_{ijk}:w_{ij}w_{ik}-w_{ij}w_{jk}+w_{ik}w_{jk},$$ but they are not isomorphic as $\mc{S}_n$-modules: the symmetric group action on $I^2(n)$ involves signs. For instance $(12)\cdot YB_{123}=w_{12}w_{23}-w_{12}w_{13}+w_{23}w_{13}=-YB_{123}.$

\begin{prop}\label{prop 6.3}
For $n\geq4$ the degree two component of the ideal of relations decomposes as $$I^2(n)=V(1,1)_n\oplus V(1,1,1)_n.$$
For $n=2$ we have $I^2(2)=0$ and for $n=2$ and $n=3$ we have $I^2(n)=0$ and $I^2(3)=V_{(1,1,1)}.$
\end{prop}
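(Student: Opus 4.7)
\emph{Proof plan.} The plan is to identify $I^2(n)$ with the sign-twisted permutation module on the $3$-element subsets of $\un{n}$ and then apply Pieri's rule. First I would check that the $\binom{n}{3}$ generators $YB_{ijk}$, $1\leq i<j<k\leq n$, are linearly independent: every monomial appearing in $YB_{ijk}$ involves only the letters $w_{ab}$ with $\{a,b\}\subset\{i,j,k\}$, so for distinct $3$-subsets the monomial supports in $\mc{A}^2(n)$ are disjoint. Hence $\{YB_{ijk}\}$ is a basis and $\dim I^2(n)=\binom{n}{3}$.

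Next I would compute the stabilizer action on the line $\mathbb{Q}\cdot YB_{ijk}$. Using $w_{ij}=w_{ji}$ together with the graded-commutativity $w_{ab}w_{cd}=-w_{cd}w_{ab}$ in $\mc{A}_n$, one checks by direct calculation that each of the three transpositions $(ij),(ik),(jk)$ sends $YB_{ijk}$ to $-YB_{ijk}$; already displayed above for $(ij)$ in the remark preceding the statement. Therefore the stabilizer $\mc{S}_{n-3}\times \mc{S}_{3}$ of $\{i,j,k\}$ acts on the line $\mathbb{Q}\cdot YB_{ijk}$ via the character $\mathbf{1}\otimes \mathrm{sgn}$, and by transitivity of the $\mc{S}_n$-action on $3$-subsets,
$$I^2(n)\;\cong\;\mathrm{Ind}_{\mc{S}_{n-3}\times \mc{S}_{3}}^{\mc{S}_n}\bigl(\mathbf{1}\otimes \mathrm{sgn}\bigr).$$

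Finally I would apply Pieri's rule. The Frobenius characteristic of this induced module is $s_{(n-3)}\cdot e_{3}$, and the Pieri rule for elementary symmetric functions expands it as $\sum s_{\lambda}$ over partitions $\lambda\vdash n$ obtained from $(n-3)$ by adding a vertical strip of length $3$. A quick enumeration shows that for $n\geq 4$ the only valid shapes are $(n-2,1,1)$ and $(n-3,1,1,1)$, i.e.\ $V(1,1)_n$ and $V(1,1,1)_n$ in stable notation. The unstable cases follow directly: for $n=2$ there are no $3$-subsets, so $I^2(2)=0$; for $n=3$ the space is spanned by the single element $YB_{123}$ on which all of $\mc{S}_{3}$ acts by the sign, giving $V_{(1,1,1)}$. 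The main obstacle is the sign bookkeeping in identifying the stabilizer character: one has to keep careful track of how the anti-commutativity of the $w_{ab}$ interacts with the relabeling, in order to confirm that every transposition in $\mc{S}_{3}$ acts by $-1$ (not $+1$); once this is settled, the passage to the induced module and the Pieri enumeration are routine.
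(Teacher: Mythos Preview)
Your argument is correct, with one small slip: when you justify linear independence of the $YB_{ijk}$ by disjointness of their monomial supports, you should say ``in $\Lambda^2(n)$'' rather than ``in $\mc{A}^2(n)$'' --- in the quotient $\mc{A}^2(n)$ the relations are zero by definition. The disjoint-support argument is perfectly valid in $\Lambda^2(w_{ij})$, which is where $I^2(n)$ actually lives.

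The approach is genuinely different from the paper's. The paper proceeds by a direct character computation: for a permutation $\pi$ of cycle type $(i_1,i_2,\ldots)$ it counts, with signs, the basis vectors $YB_{ijk}$ that $\pi$ sends to $\pm$ themselves, obtaining
\[
\chi_{I^2(n)}(\pi)=\binom{i_1}{3}-i_1 i_2+i_3,
\]
and then checks via the Frobenius formula that this equals $\chi_{V_{(n-2,1,1)}}+\chi_{V_{(n-3,1,1,1)}}$. Your route is more structural: once you observe that the setwise stabilizer $\mc{S}_{n-3}\times\mc{S}_3$ acts on the line $\mathbb{Q}\cdot YB_{ijk}$ by $\mathbf{1}\otimes\mathrm{sgn}$, the module is identified as the induced representation with Frobenius characteristic $h_{n-3}\cdot e_3$, and the dual Pieri rule immediately yields $s_{(n-2,1,1)}+s_{(n-3,1,1,1)}$. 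Your method avoids any character-value bookkeeping and makes transparent why exactly two irreducibles appear (only two vertical $3$-strips can be attached to a single row); the paper's method, on the other hand, is entirely elementary and does not invoke induced modules or symmetric functions.
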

\begin{proof}
The characters of the irreducible modules $V_{(n-2,1,1)}$ and $V_{(n-3,1,1,1)}$ can be computed using the Frobenius formula and are given in the character table in the proof of the next lemma. We obtain the character of $I^2(n)$ by direct computation. The symmetric group acts on the canonical basis $\{YB_{ijk}\}$ of $I^2(n)$ by permuting the elements of this basis and adding a $\pm$ sign. The relation $YB_{ijk}$ is invariant (up to sign) by a permutation $\pi$ if and only if $\{i,j,k\}$ is a union of cycles of $\pi$. If the permutation $\pi$ has type $(i_1,i_2,\ldots,i_n)$ ($i_q$ is the number of cycles of length $q$), then $\pi$ leaves invariant the elements $YB_{ijk}$ corresponding to three fixed points $i,j,k$ (the number of relations of this first type is $i_1\choose 3$) and also elements $YB_{pqr}$ corresponding to a three cycle $(pqr)$ (and the number of relations of this second type is $i_3$). In the last case, if $\{i\}$ is a fixed point of $\pi$ and $(uv)$ is a two-cycle, we have $\pi\cdot Y_{iuv}=-Y_{iuv}$ (and the total number of such relations is $i_1i_2$). Therefore the value of the character on $\pi$ is $$\chi_{I^2(n)}(i_1,i_2,\ldots,i_n)={i_1\choose3}+i_3-i_1i_2,$$ and it is equal to $\chi_{V(n-2,1,1)}+\chi_{V(n-3,1,1,1)}$.
\end{proof}

\begin{lem}\label{lemma6.4}
For $n\geq7$ the degree two component of the exterior algebra $\Lambda^2(n)=\Lambda^*(w_{ij})_{1\leq i<j\leq n}$ decomposes as $$\Lambda^2(n)=2V(1)_n\oplus 2V(2)_n\oplus3V(1,1)_n\oplus V(3)_n\oplus 2V(2,1)_n\oplus V(1,1,1)_n\oplus V(3,1)_n.$$ The unstable cases have the decompositions:\\
\begin{center}$\begin{array}{rcl}
   \Lambda^2(2)&=& 0,\\
   \Lambda^2(3) & = & V_{(2,1)}\oplus V_{(1,1,1)}, \\
   \Lambda^2(4) & = &2 V_{(3,1)}\oplus V_{(2,2)}\oplus2 V_{(2,1,1)} \oplus V_{(1,1,1,1)}, \\
   \Lambda^2(5) & = & 2V_{(4,1)} \oplus 2V_{(3,2)}\oplus 3V_{(3,1,1)}\oplus V_{(2,2,1)}\oplus V_{(2,1,1,1)},\\
   \Lambda^2(6) & = & 2V_{(5,1)} \oplus 2V_{(4,2)}\oplus 3V_{(4,1,1)}\oplus V_{(3,3)}\oplus 2V_{(3,2,1)}\oplus V_{(3,1,1,1)}.
 \end{array}
$\end{center}
\end{lem}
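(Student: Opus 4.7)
My plan is to decompose $\Lambda^2(n)$ into two natural $\mc{S}_n$-invariant subspaces corresponding to the two orbits of unordered pairs of edges, and then compute each piece using Frobenius characteristics and Pieri's rule. Set $X_A=\lan w_{ij}\wedge w_{kl}\mid\{i,j\}\cap\{k,l\}=\emptyset\ran$ (disjoint edges) and $X_B=\lan w_{ij}\wedge w_{ik}\ran$ (edges sharing a vertex); both are $\mc{S}_n$-submodules and $\Lambda^2(n)=X_A\oplus X_B$. The key structural observation is that each summand is an induced representation from a Young or wreath subgroup, twisted by a one-dimensional sign character coming from the wedge, since for instance $w_{12}\wedge w_{34}\mapsto w_{34}\wedge w_{12}=-w_{12}\wedge w_{34}$ when one interchanges the two disjoint edges.

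Carrying this out, the stabilizer (up to sign) of $w_{12}\wedge w_{34}$ is $(\mc{S}_2\wr\mc{S}_2)\times\mc{S}_{n-4}$ and that of $w_{12}\wedge w_{13}$ is $\mc{S}_1\times\mc{S}_2\times\mc{S}_{n-3}$, so
$$X_A\cong\mathrm{Ind}_{(\mc{S}_2\wr\mc{S}_2)\times\mc{S}_{n-4}}^{\mc{S}_n}(\chi_A),\qquad X_B\cong\mathrm{Ind}_{\mc{S}_1\times\mc{S}_2\times\mc{S}_{n-3}}^{\mc{S}_n}(\mathbf{1}\ot\mathrm{sgn}\ot\mathbf{1}),$$
where $\chi_A$ is trivial on the base $\mc{S}_2\times\mc{S}_2$ and sign on the top $\mc{S}_2$. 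Passing to Frobenius characteristics (multiplication for the Young pieces, plethysm for the wreath factor) gives
$$\mathrm{ch}(X_A)=e_2[h_2]\cdot s_{(n-4)},\qquad \mathrm{ch}(X_B)=h_1\cdot e_2\cdot s_{(n-3)}.$$

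The plethysm $e_2[h_2]$ is evaluated from the identity $h_2\cdot h_2=h_2[h_2]+e_2[h_2]$ together with Pieri's $h_2^2=s_{(4)}+s_{(3,1)}+s_{(2,2)}$ and the classical $h_2[h_2]=s_{(4)}+s_{(2,2)}$, yielding $e_2[h_2]=s_{(3,1)}$; similarly $h_1 e_2=s_{(2,1)}+s_{(1,1,1)}$. The computation then reduces to applying the horizontal-strip (Pieri) rule to $s_{(3,1)}\cdot s_{(n-4)}$, $s_{(2,1)}\cdot s_{(n-3)}$ and $s_{(1,1,1)}\cdot s_{(n-3)}$; for $n\ge 7$ this produces six, four and two Schur summands respectively, whose sum is exactly the right-hand side of the lemma. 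The unstable cases $n=2,\ldots,6$ arise from the same Pieri computations, with the partitions that cease to exist for small $n$ (notably $(n-4,3,1)$ when $n<7$) simply dropped.

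The main obstacle is twofold: first, the careful bookkeeping in identifying $X_A$ and $X_B$ as induced from the correct \emph{twisted} one-dimensional characters (rather than the trivial ones), by tracing through the signs that the wedge product imposes on each type of swap; and second, invoking the non-trivial plethystic identity $e_2[h_2]=s_{(3,1)}$. Once these ingredients are in place, the remaining steps are routine Pieri expansions.
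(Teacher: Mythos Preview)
Your argument is correct, but it follows a genuinely different route from the paper's. The paper first invokes the decomposition $\mc{A}^1(n)=V(0)_n\oplus V(1)_n\oplus V(2)_n$ from Proposition~\ref{prop 2.6} and then expands
\[
\Lambda^2\big(V(0)_n\oplus V(1)_n\oplus V(2)_n\big)=\Lambda^2V(1)_n\oplus\Lambda^2V(2)_n\oplus V(1)_n\oplus V(2)_n\oplus\big(V(1)_n\otimes V(2)_n\big),
\]
computing each summand via explicit character formulas (Frobenius formula for the irreducible characters, Adams operations $\psi^2_V$, and the determinant identity $\chi_{\Lambda^2V}=\tfrac12(\psi_V^{1\,2}-\psi_V^2)$). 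You instead split $\Lambda^2(n)$ along the two $\mc{S}_n$-orbits on unordered pairs of edges, recognise each piece as an explicitly twisted induced module, and finish with symmetric-function manipulations (the plethysm $e_2[h_2]=s_{(3,1)}$ and Pieri's rule). The paper's approach ties the lemma back to the earlier structure results on $L\mc{P}_2(n)$, while yours is self-contained, does not require knowing the irreducible decomposition of $\mc{A}^1(n)$ in advance, and handles the stable and unstable cases in a single uniform Pieri computation; it also extends more naturally to other ``graph'' modules where such an orbit decomposition is available.
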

\begin{proof}
These decompositions are obtained from the expansion \\
\begin{center}$\begin{array}{rl}
  \Lambda^2(\mc{A}^1) & =\Lambda^2(V(0)_n\oplus V(1)_n\oplus V(2)_n) \\
   &  =\Lambda^2V(1)_n\oplus \Lambda^2V(2)_n\oplus V(1)_n\oplus V(2)_n\oplus (V(1)_n\otimes V(2)_n),
\end{array}$\end{center}

 where\\
 \begin{center}$\begin{array}{cll}
   V(1)_n\otimes V(2)_n & = & V(1)_n\oplus V(2)_n\oplus V(1,1)_n\oplus V(3)_n\oplus V(2,1)_n, \\
   \Lambda^2V(1)_n & = & V(1,1)_n, \\
   \Lambda^2V(2)_n & = & V(1,1)_n\oplus V(2,1)_n\oplus V(1,1,1)_n\oplus V(3,1)_n.
 \end{array}$\\ \end{center}
 The decompositions of the tensor product is from \cite{M} (and can be checked using Littlewood-Richardson rule or using the characters from the next table). For the degree two exterior algebra one can use the next character table ($(i_1,i_2,\ldots,i_n)$ stands for the conjugacy class with $i_q$ cycles of length $q$):\\
\begin{center} \begin{tabular}{|c|c|c|c|}
   \hline
    & $\chi_{V}(i_1,\ldots,i_n)$ & $\psi_{V}^{2}(i_1,\ldots,i_n)$ & $\chi_{\Lambda^2V}(i_1,\ldots,i_n)$ \\ \hline
   $V(1)_n$ & $i_1-1$ & $i_1+2i_2-1$ & ${i_2-1\choose2}-i_2$ \\ \hline
   $V(1,1)_n$ & ${i_1-1\choose 2}-i_2$ &  &  \\ \hline
   $V(2)_n$ & $\frac{i_1(i_1-3)}{2}+i_2 $& $\frac{(i_1+2i_2)(i_1+2i_2-3)}{2}+$ & $\frac{i_1(i_1-3)(i_1^2-3i_1-2)}{8}+$\\
            &                            &                                $+2i_4$        & $+\frac{(i_1^2-5i_1+3)i_2-i_2^2}{2}-i_4$ \\ \hline
   $V(3)_n$ & $\frac{i_1(i_1-1)(i_1-5)}{6}+$  &                                         &  \\
            &           $+i_2(i_1-1)+i_3$                                     &                                        &\\ \hline
   $V(2,1)_n$ & $\frac{i_1(i_1-2)(i_1-4)}{3}-i_3$ &  &  \\ \hline
   $V(1,1,1)_n$ & ${i_1-1\choose 3}+i_2(1-i_1)+i_3 $&  &  \\ \hline
   $V(3,1)_n$ & $\frac{i_1(i_1-1)(i_1-3)(i_1-6)}{8}+ $&  &  \\
                & $+i_2{i_1-1\choose 2}-{i_2\choose 2}-i_4 $                                   &   &  \\
   \hline
 \end{tabular}\\
 \end{center}
\noindent The entries in the second column are computed using the Frobenius formula, in the third column we used $$(i_1,i_2,i_3,i_4,\ldots)^2=(i_1+2i_2, 2i_4,\ldots),$$ and in the last column we used the determinant formula
\begin{center}$\chi_{\Lambda^2(V)}=\frac{1}{2}$\begin{tabular}{|c c|}
$\psi^1_V$ & 1 \\
$\psi^2_V$ & $\psi^1_V $\\
\end{tabular}
\end{center}

 \noindent(see \cite{K}).
 \end{proof}

From Proposition \ref{prop 6.3} and Lemma \ref{lemma6.4} we obtain:

\begin{prop}
For $n\geq 7$ the degree 2 component of the Arnold algebra decomposes as $$\mc{A}^2(n)=2V(1)_n\oplus 2V(2)_n\oplus 2V(1,1)_n\oplus V(3)_n\oplus 2V(2,1)_n\oplus V(3,1)_n.$$ In the unstable cases we have \begin{center}$\begin{array}{rcl}
                                              \mc{A}^2 (2)& = & 0, \\
                                              \mc{A}^2 (3) & = & V_{(2,1)}, \\
                                              \mc{A}^2 (4) & = & 2V_{(3,1)}\oplus V_{(2,2)}\oplus V_{(2,1,1)}, \\
                                              \mc{A}^2 (5) & = & 2V_{(4,1)}\oplus 2V_{(3,2)}\oplus 2V_{(3,1,1)}\oplus V_{(2,2,1)},\\
                                              \mc{A}^2 (6) & = & 2V_{(5,1)}\oplus 2V_{(4,2)}\oplus 2V_{(4,1,1)}\oplus V_{(3,3)}\oplus 2V_{(3,2,1)}.
                                            \end{array}$\end{center}

\end{prop}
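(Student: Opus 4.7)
The plan is to exploit the short exact sequence of $\mc{S}_n$-modules
\[
0 \longrightarrow I^2(n) \longrightarrow \Lambda^2(n) \longrightarrow \mc{A}^2(n) \longrightarrow 0,
\]
arising from the defining presentation of the Arnold algebra. Since we work over $\mathbb{Q}$, Maschke's theorem guarantees that this sequence is split as a sequence of $\mc{S}_n$-representations, so the irreducible multiplicities in $\mc{A}^2(n)$ are obtained by subtracting those in $I^2(n)$ from those in $\Lambda^2(n)$. Both of these have already been computed: Lemma \ref{lemma6.4} records the decomposition of $\Lambda^2(n)$, and Proposition \ref{prop 6.3} gives $I^2(n) = V(1,1)_n \oplus V(1,1,1)_n$ for $n \geq 4$.

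For the stable range $n \geq 7$, I would simply line up the two decompositions term by term. The only types that appear in $I^2(n)$ are $V(1,1)_n$ and $V(1,1,1)_n$: in $\Lambda^2(n)$ the multiplicity of $V(1,1)_n$ is $3$ and of $V(1,1,1)_n$ is $1$, so after quotienting these drop to $2$ and $0$ respectively, while the multiplicities of $V(1)_n$, $V(2)_n$, $V(3)_n$, $V(2,1)_n$, $V(3,1)_n$ are unaffected. This yields exactly the claimed decomposition for $\mc{A}^2(n)$.

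For the unstable cases $n = 2, 3, 4, 5, 6$, I would carry out the same subtraction using the unstable decompositions listed in Lemma \ref{lemma6.4}, together with $I^2(2) = 0$, $I^2(3) = V_{(1,1,1)}$, and for $n = 4, 5, 6$ the translation of $V(1,1)_n \oplus V(1,1,1)_n$ into partition notation (for example, $I^2(4) = V_{(2,1,1)} \oplus V_{(1,1,1,1)}$, and so on). Each case is then a direct verification.

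Essentially no obstacle is expected: the real content has been carried out already in Lemma \ref{lemma6.4} (via the character computation using power sums and the $\Lambda^2$ determinant formula) and in Proposition \ref{prop 6.3} (via the Frobenius character count on $\{YB_{ijk}\}$). The present proposition is just the bookkeeping that subtracts these two calculations; the only thing to be careful about is matching the stable notation $V(\mu)_n$ against the partition notation in the low-$n$ cases, which must stay consistent with the constraint $n - \sum \mu_i \geq \mu_1$.
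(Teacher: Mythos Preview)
Your proposal is correct and is exactly the paper's approach: the paper simply states that the proposition follows from Proposition \ref{prop 6.3} and Lemma \ref{lemma6.4}, which amounts precisely to the subtraction $\Lambda^2(n)\ominus I^2(n)$ you describe via the split short exact sequence. The unstable cases likewise reduce to the same termwise subtraction using the unstable decompositions already listed in those two results.
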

These decompositions coincide with the formulae from \cite{CF}, with a single difference in $\mc{A}^2 (5)$.

\end{document}